\documentclass[reqno,12pt]{amsart}

\usepackage{amsfonts, amssymb, verbatim}
\textwidth 15.0cm \textheight 21cm \oddsidemargin .5cm
\evensidemargin 0cm \topmargin -1.0cm
\usepackage{amssymb,epsfig,amscd, verbatim}
\usepackage{amsthm}
\usepackage{amsmath}
\usepackage[T1]{fontenc}

\newtheorem{theorem}{Theorem}
\newtheorem{lemma}[theorem]{Lemma}
\newtheorem{proposition}[theorem]{Proposition}

%%%%%%%%%%%%%%%%%%%%%%%%%%%%%%%%%%%%%%%%%%%%%%%%%%%%%%%%%%%%%%%%%%%%%
\theoremstyle{definition}
\newtheorem{definition}[theorem]{Definition}

%%%%%%%%%%%%%%%%%%%%%%%%%%%%%%%%%%%%%%%%%%%%%%%%%%%%%%%%%%%%%%%%%%%%%
\numberwithin{equation}{section} \numberwithin{theorem}{section}

\theoremstyle{remark}
\newtheorem{remark}[theorem]{Remark}

%%%%%%%%%%%%%%%%%%%%%%%%%%%%%%%%%%%%%%%%%%%%%%%%%%%%%%%%%%%%%%%%%%%%%%%
\def\z{\,_{\dot z}\,}

%%%%%%%%%%%%%%%%%%%%%%%%%%%%%%%%%%%%%%%%%%%%
\def\vac{|0\rangle}                            %% vacuum vector
      % Lie algebra g
           % Fourier transform

\def\l{\lambda}
\def\m{\mu}
%%%%%%%%%%%%%%%%%%%%%%%%%%%%%%%%%%%%%%%%%%%%%%%%%%%%%%%%%%
%\numberwithin{equation}{section} \numberwithin{theorem}{section}
%%%%%%%%%%%%%%%%%%%%%%%%%%%%%%%%%%%%%%%%%%%%%%%%%%%%%%%%%%%%%%%%%%%%%%%%%

%%%%%%%%%%%%%%%%%%%%%%%%%%%%%%

%\newcommand\lbb[1]{\label{#1} {\textbf{(#1)}}\;\;\;}      % temporary
%\newcommand\pgrf[1]{\pageref{#1} {\textbf{(#1)}}\;\;\;}   % temporary
%%%%%%%%%%%%%%%%%%%%%%%%%%%%%%%%%%%%%%%%%%%%%%%%%%%%%%%%%%%%%%%%%%%%
%    Blank box placeholder for figures (to avoid requiring any
%    particular graphics capabilities for printing this document).

%%%%%%%%%%%%%%%%%%%%%%%%%%%%%%%%%%%%%%%%%%%%%%%%%%%%%%%%%%%%%%%%%%%%%%%%%

\def\tt{\otimes}                               %% tensor product
\def\<{\langle}
\def\>{\rangle}

\def\d{\partial}

\def\dprod{\displaystyle\prod}
\def\dsum{\displaystyle\sum}
                %%  -->> surjection
                    %%  (--> injection
                 %% isomor.
                                    %%  restriction
                               %%  such that
           %% absolute value
                                   %% the symmetric group
\def\vacuum{|0\rangle}
\def\vac{\mathbf{1}}                            %% vacuum vector

% invariant bilinear form << , >>

% invariant bilinear form < , >
           % dual pairing ( , )
  % equivalence class < >
%%%%%%%%%%%%%%%%%%%%%%%%%%%
       % complex numbers
       % integers
       % real numbers
       % rational numbers
       % natural numbers

                % a field

%%%%%%%%%%%%%%%%%%%%%%%%%%%
                         %%% some abbreviations

%%%%%%%%%%%%%%%%%%%%%%%%%%%
               %% quantum groups

%%%%%%%%%%%%%%%%%%%%%%%%%%%
      % Lie algebra g
      % its Cartan subalgebra

    % pseudoalgebras of vector fields

%%%%%%%%%%%%%%%%%%%%%%%%%%%

           % Fourier transform

%\def\T{\mathcal{T}}           % category T
         % category FB
%\def\D{\mathcal{D}}           % category D
     % category of sets
       % category of varieties
       % 2-category of groupoids
       % category of vector spaces
    % category of finite dim vector spaces
       % category of reps
    % category of finite dim reps

                 % univ enveloping alg
%%%%%%%%%%%%%%%%%%%%%%%%%%%%%%%%%%%%%%%%%%%%%%%%%%%%%%%%%%%%%%%%%%%%
\def\z{\,_{\dot z}\,}

\def\ddx{\frac{d}{dx}\,}

\def\ddy{\frac{d}{dy}\,}
\def\ddz{\frac{d}{dz}\,}

\def\dxyz{z^{-1}\delta\left(\frac{x-y}{z}\right)}
\def\dyxmz{z^{-1}\delta\left(\frac{y-x}{-z}\right)}
\def\dxzy{y^{-1}\delta\left(\frac{x-z}{y}\right)}

\def\va{(V,\z,\vac)}
\def\v2a{(V,\z,\vac,D)}

\def\dxyz{\delta\left(\frac{x-y}{z}\right)}
\def\dmyxz{\delta\left(\frac{-y+x}{z}\right)}
\def\dxzy{\delta\left(\frac{x-z}{y}\right)}

\def\va{(V,\z,\vac)}
\def\va1{(V,Y,\vacuum, T)}
\def\v2a{(V,\z,\vac,D)}

%%%%%%%%%%%%%%%%%%%%%%%%%%%%%%%%%%%%%%%%%%%%%%%%%%%%%%%%%%%%
\begin{document}

\title{Fusion rules for the Virasoro algebra of central charge 25.}
\author[Fusion Rules ]{Florencia Orosz Hunziker}
\address{Department of Mathematics\\
Yale University\\
	442 Dunham Lab\\
	10 Hillhouse Ave
	New Haven, CT 06511. Fax 203-432-7316, Phone 203-432-7058}

\email{florencia.orosz@yale.edu}

\maketitle
\begin{abstract}
Let $\mathcal{F}_{25}$ be the family of irreducible lowest weight modules for the Virasoro algebra of central charge $25$  which are not isomorphic to Verma modules. Let $L(25,0)$ be the Virasoro vertex operator algebra of central charge 25. We prove that the fusion rules for the $L(25,0)$-modules in $\mathcal{F}_{25}$  are in correspondence with the tensor rules for the irreducible finite dimensional representations of $sl(2, \mathbb{C})$, extending the known correspondence between modules for the Virasoro algebras of dual central charges 1 and 25. 	
\end{abstract}
\section{introduction}
In 1990, in their important paper \cite{FF}, Feigin and Fuchs described the structure of Verma modules for the Virasoro algebra  and the homomorphisms between them. They stated the projection formulas for singular vectors on the density modules and described the duality between the category of Verma modules with central charge $c$ and the category of Verma modules with central charge $26-c$ for $c\in \mathbb{C}$. In particular,  they established an anti-equivalence of additive categories between the category of Verma modules for $Vir_{c=1}$ and the Verma modules for $Vir_{c=25}$ which assigns the Verma module of central charge $1$ and lowest weight $h$, $M(1, h)$, to the Verma module of central charge $25$ and lowest weight $1-h$, $M(25, 1-h)$ and reverses morphisms. On the other hand, Segal on his 1981 paper \cite{Se} noted a correspondence between the finite dimensional irreducible representations of $sl(2,\mathbb{C})$ and certain representations of the Virasoro algebra of central charge 1 by a dual pair type of argument. 
Later, Frenkel and Zhu, proved in \cite{FZ} that for $c\in \mathbb{C}$, $L(c,0)$, the irreducible quotient of the Verma module $M(c,0)$, has a vertex algebra operator algebra structure and that the irreducible quotient of $M(c,h)$, $L(c, h)$,  is an $L(c,0)$-module for $h\in\mathbb{C}$. The vertex algebra version of the decomposition noted in \cite{Se} was proved by Dong and Griess in \cite{DG}.
In his doctorate thesis \cite{S}, Styrkas used the dual pair decomposition of Segal to state an antiequivalence of tensor categories between the category of irreducible finite dimensional modules for $sl(2, \mathbb{C})$ and the semisimple tensor category generated by the irreducible $L(1,0)$-modules which are not Verma modules, i.e the modules of the from $L\left(1,\frac{n^2}{4}\right)$ for $n\geq 0$.
Milas, independently proved in  \cite{M}, that the fusion rules for the irreducible non verma $L(1,0)$ modules coincide with the tensor rules of the irreducible finite dimensional representations of $sl(2,\mathbb{C})$. 
More recently, McRae proved in \cite{Mc} using the fusion rules from \cite{M} that the semisimple category generated by the $L(1,0)$-modules $L(1,\frac{m^2}{4})$ for $m\geq 0$ is equivalent to the tensor structure of finite dimensional irreducible $sl(2,\mathbb{C})$-modules modified by a 3-cocycle.

In this work,  following the methods in \cite{M} and \cite{FZ2} together with results in \cite{Z}, \cite{L}, \cite{W}  we show that the fusion rules for the non-Verma irreducible modules for the Virasoro algebra $L(25,0)$ are the same as the tensor rules for the finite dimensional irreducible representations of $sl(2, \mathbb{C})$. Our main result is  
\begin{theorem}
	Let $m,n\geq 0$. Then 
	\begin{align*}
	{\rm dim \ } I  \binom{L(25, 1-\frac{(r+2)^2}{4})}{L(25, 1-\frac{(m+2)^2}{4})\ \ L(25,1-\frac{(n+2)^2}{4})}=
	\begin{cases} 
	1 & \textrm{\rm if } r \in \{|m-n|, |m-n|+2, \cdots, m+n\}  \\       
	0 & \textrm{\rm otherwise.}  
	\end{cases}
	\end{align*}	
	
\end{theorem}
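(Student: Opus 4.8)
The plan is to compute these dimensions via Zhu's algebra $A(V)$ \cite{Z} and the Frenkel--Zhu description of intertwining operators \cite{FZ2}, in the sharp form of Li \cite{L}, carrying out at central charge $25$ the kind of null-vector analysis that \cite{M} performed at central charge $1$. Write $V=L(25,0)$ and, for $j\geq 0$, set $h_j=1-\tfrac{(j+2)^2}{4}$; then $\mathcal{F}_{25}=\{L(25,h_j):j\geq 0\}$, one has $h_j=h_{r,s}$ for every $r,s\geq 1$ with $r+s=j+2$, the lowest singular vector of the Verma module $M(25,h_j)$ sits at level $j+1$, and consequently the maximal proper submodule of $M(25,h_j)$ is generated by that single singular vector. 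Since $25$ is not a minimal-model central charge, the only singular vector of $M(25,0)$ is $L_{-1}\vac$, so Zhu's algebra $A(V)$ is the polynomial ring $\mathbb{C}[x]$ with $x$ the class of the conformal vector (cf.\ \cite{W}, \cite{FZ2}), and each one-dimensional space $L(25,h)(0)$ carries the $A(V)$-module structure $x\mapsto h$. The Frenkel--Zhu--Li correspondence then yields
\begin{multline*}
\dim I\binom{L(25,h_r)}{L(25,h_m)\ \ L(25,h_n)}\\
\leq\ \dim\operatorname{Hom}_{A(V)}\bigl(A(L(25,h_m))\otimes_{A(V)}L(25,h_n)(0),\ L(25,h_r)(0)\bigr),
\end{multline*}
so the problem splits into computing the right-hand side and producing a matching lower bound.

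First I would pin down the Frenkel--Zhu bimodule $A(L(25,h_m))$ over $A(V)=\mathbb{C}[x]$. For the Verma module one has $A(M(25,h))\cong\mathbb{C}[x,y]$, with $x$ and $y$ the left and right actions of the conformal vector; passing to the irreducible quotient imposes the single relation given by the image $G_m(x,y)$ of the level-$(m+1)$ singular vector, so $A(L(25,h_m))\cong\mathbb{C}[x,y]/(G_m)$. Specializing the right action at $h_n$ turns the right-hand side of the displayed inequality into $\operatorname{Hom}_{\mathbb{C}[x]}\bigl(\mathbb{C}[x]/(G_m(x,h_n)),\,\mathbb{C}_{h_r}\bigr)$, which is nonzero exactly when $h_r$ is a root of the one-variable polynomial $G_m(\,\cdot\,,h_n)$, and is then one-dimensional when that root is simple. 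The computational heart is therefore an explicit handle on the $c=25$ null vectors: using the Feigin--Fuchs projection formulas for singular vectors on density modules \cite{FF} (the device also exploited in \cite{M} at $c=1$), I would show that $G_m(x,y)$, evaluated on lowest-weight data, factors into linear forms whose common zero set is an arithmetic progression of step $2$. Substituting $h_j=1-\tfrac{(j+2)^2}{4}$, the condition ``$h_r$ is a root of $G_m(\,\cdot\,,h_n)$'' becomes precisely $r\in\{|m-n|,|m-n|+2,\dots,m+n\}$, that is, the condition that the $(r+1)$-dimensional irreducible $sl(2,\mathbb{C})$-module occurs in the tensor product of the $(m+1)$- and $(n+1)$-dimensional ones; the symmetry of spaces of intertwining operators under contragredients, together with self-contragredience of the $L(25,h)$, makes it natural that the final answer is symmetric in $m,n,r$.

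What remains, and what I expect to be the main obstacle, is the lower bound: exhibiting a nonzero intertwining operator for each $r$ in this Clebsch--Gordan range. The Frenkel--Zhu bound is attained exactly when the relevant $A(V)$-module map lifts to a genuine intertwiner, and by Li's criterion \cite{L} (with the module-theoretic input of \cite{Z}, \cite{W}) this reduces to constructing the operator for $L(25,h_m)$ against the generalized Verma module built from $L(25,h_n)(0)$ and then checking that it descends through the singular vector defining $L(25,h_n)$ --- so that, in the end, the null vectors of all three modules must be accommodated simultaneously. The crux is to show that this descent is possible exactly on the arithmetic progression found above: this needs a careful count ruling out any accidental extra vanishing that would kill the operator inside the range, after which combining the sharp upper bound with the constructed operators gives the asserted equality.
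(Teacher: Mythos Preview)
Your proposal is correct and follows essentially the same approach as the paper: the upper bound via the Frenkel--Zhu--Li inequality, the bimodule $A(M(25,h))\cong\mathbb{C}[x,y]$, the Feigin--Fuchs projection of the singular vector onto density modules to identify the relation $G_m$, and the lower bound via Li's construction followed by descent through the singular vectors. The one point worth noting is the specific mechanism the paper uses for the descent: it starts from an intertwiner of type $\binom{M(25,h_r)'}{L(25,h_m)\ M(25,h_n)}$, checks (via the same density-module projection) that it kills the singular vector in $M(25,h_n)$, then applies the contragredient symmetry $I\binom{W_3}{W_1\ W_2}\cong I\binom{W_2'}{W_1\ W_3'}$ to swap the roles of $n$ and $r$, and repeats the singular-vector check to descend on the $r$-slot as well---so the ``simultaneous accommodation'' you anticipate is realized as a two-step bootstrap rather than a single computation.
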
	
The fact that the fusion rules for this distinguished family of $L(25,0)$-modules coincide with those of $sl(2,\mathbb{C})$ generalizes the existing duality between the representations for $L(1,0)$ and $L(25,0).$ Namely, our result shows that the relation between the Virasoro algebras of central charge 1 and 25 is deeper than simply a duality of additive categories, as illustrated in Figures \ref{fig:minipage1}  and \ref{fig:minipage2} below. 
\begin{figure}[ht]
	\centering
	\begin{minipage}[b]{\linewidth}
		\centering
		\includegraphics[scale=1]{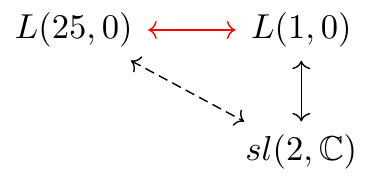}
		\caption{Dualities between $sl(2,\mathbb{C})$ and the Virasoro vertex algebras of central charges 1 and 25.}
		\label{fig:minipage1}
	\end{minipage}

	\quad
	\begin{minipage}[b]{\linewidth}
		\centering
		\includegraphics[scale=1]{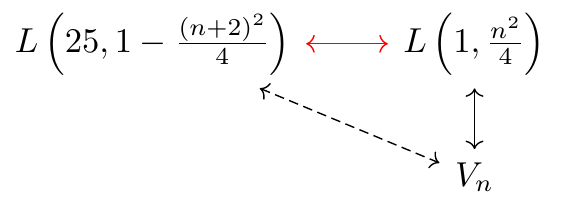}
		\caption{Correspondence of modules in the dualities between representations of $L(25,0)$, $L(1, 0)$ and $sl(2,\mathbb{C})$.}
		\label{fig:minipage2}
	\end{minipage}
\end{figure}

 Our theorem suggests that this equivalence can be extended to an equivalence of semisimple tensor categories.  This tensor category equivalence will be described in a future paper. In \cite{M1} and \cite{MA} logarithmic extensions of intertwining operators between the $L(1,0)$-modules were studied and this technique can be adapted to the case of central charge 25 with the aim of applying the logarithmic tensor product theory \cite{HLZ} to the category $C_{L(25,0)}.$ Moreover, in light of the corresponding Feigin-Fuchs duality \cite{D}, it is an interesting question to explore whether this equivalence of fusion rules and tensor categories holds for general $W$-algebras.

{\bf Aknowledgements} 
The author would like to thank Igor Frenkel for suggesting the study of the case of the Virasoro algebra of central charge 25. The author would also like to thank Igor Frenkel, Jinwei Yang, Antun Milas and Gregg Zuckerman for useful conversations and comments. Finally, the author thanks the reviewer for her/his/their insightful remarks.    
\section{preliminaries}
\subsection{The Virasoro algebra}
The Virasoro Lie algebra, denoted $Vir$ throughout this work, is the complex Lie algebra with generators $\{C, \{L_n\}_{n\in \mathbb{Z}}\}$ and relations
\begin{align*}
[L_n, L_m]&=(n-m)L_{m+n}+\frac{(n^3-n)}{12}\delta_{n,-m}C\\
[L_n,C]&=0.
\end{align*}

\subsubsection{Verma modules}
For complex numbers $c$ and $h$, the Verma module $M(c,h)$ for the Virasoro algebra is defined as 
\begin{align*}
M(c,h):=U(Vir)\tt_{U(Vir^{\geq 0})}\mathbb{C}_c^{h},
\end{align*}
where the $Vir^{\geq 0}:=\bigoplus_{n\geq 0} \mathbb{C}L_n\oplus \mathbb{C}C$, and the $Vir^{\geq0}$-module structure of the one dimensional space $\mathbb{C}_c^{h}:=\mathbb{C}\vac_{c,h}$ is given by

\begin{align*} 
\ &L_n \vac_{c,h}=0 \textrm{ for $n> 0$},\\
\ &L_0 \vac_{c,h}=h\vac_{c,h},\\
\ &C\vac_{c,h}=c \vac_{c,h}.
\end{align*}
We denote by $J(c,h)$ the unique (possibly trivial) maximal submodule of $M(c,h)$  and by
\begin{align*} 
L(c,h):=M(c,h)/J(c,h)
\end{align*}
its irreducible quotient.
On the other hand, the contragredient module $M(c,h)^{'}$ is defined as
\begin{align*}
M(c,h)^{'}:=\bigoplus_{n\in \mathbb{Z}}{\rm Hom}(M(c,h)^{n}, \mathbb{C})
\end{align*}
where $M(c,h)^{n}:=\{ v \in M(c,h) \mid L_0v=nv\}.$

\subsubsection{Density modules} \label{F}
For each $\lambda$ and $\mu$ the density module for the Virasoro algebra $\mathcal{D}_{\l, \m}$ is the module spanned by vectors $\{w_r\}_{r \in \mathbb{Z}}$  with action given by 
\begin{align*}
L_n.w_r=(\mu +r +\l (n+1))w_{r-n}.
\end{align*}

\subsubsection{Singular vectors}\label{svff}
We recall the results on projection formulas for singular vectors on density modules obtained in \cite{FF} following the exposition in \cite{Ke}:

In $M(c,h)$ there is a singular vector of level $N$ if and only if there exists $p,q\in \mathbb{Z}_+$ such that $pq=N$ and $t \in \mathbb{C}$ such that 
\begin{align}
&c=c(t)=13-6t-6t^{-1}\label{ct}\\
&h=h_{p,q}(t)=\frac{p^2-1}{4}t-\frac{pq-1}{2}+\frac{q^2-1}{4}t^{-1}.\nonumber
\end{align}
If $v_{p,q}(t)$ is the singular vector on $M(c,h)$ then $v_{p.q}(t)=O_{p,q}(t)\vac_{c,h}$ with
\begin{align*}
O_{p,q}(t)=\dsum_{|I|=pq}a_{I}^{p,q}(t)L_{-I}
\end{align*}
where $L_{-I}=L_{-i_{1}}\cdots L_{-i_{n}}$ and $|I|=i_1+\cdots i_n$. In this formula the sum is over sequences $I=\{i_1, \cdots i_n\}$ of ordered tuples $i_1\geq \cdots\geq i_n$, each coefficient $a_{I}^{p,q}(t)$ depends polynomially on $t$ and $t^{-1}$ and by convention the coefficient of $(L_{-1})^{pq}$ is 1. Consider $\mathcal{D}_{\l, \m}$ the density module with basis $\{w_r\}_{r \in \mathbb{Z}}$ as in Section \ref{F}. We define the function $f(\lambda, \mu,t)$ by
\begin{align*}
(O_{p,q}(t))w_0=f_{p,q}(\lambda, \mu, t)w_{pq}
\end{align*}
in $\mathcal{D}_{\lambda, \mu}$. Then, we have the following result proved in \cite{FF}:
\begin{proposition}
Let $\theta=\sqrt{-t^{-1}}$ and let
\begin{align*}
A_{p,q}(l,k)=\left(\left(\frac{p-1}{2}+l\right)\theta^{-1}+\left(\frac{q-1}{2}+k\right)\theta \right)\left(\left(\frac{p+1}{2}-l\right)\theta^{-1}+\left(\frac{q+1}{2}-k\right)\theta\right).
\end{align*}
Then, 
\begin{align}
f_{p,q}^2(\lambda, \mu, t)=\prod_{\substack{ -\frac{p-1}{2} \leq l\leq \frac{p-1}{2} \\-\frac{q-1}{2} \leq k \leq \frac{q-1}{2} }} \left(\left(\mu+A_{p,q}(l, k)\right)\left(\mu +A_{p,q}(-l, -k)\right)-4\lambda (l\theta^{-1}+k\theta)^2\right).\label{apq}
\end{align}
\end{proposition}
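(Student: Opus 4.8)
The plan is to read both sides of \eqref{apq} as monic polynomials in $\mu$ of the same degree and to reduce the statement to a comparison of their zero multisets. Since $\mathcal{D}_{\lambda,\mu}$ has one-dimensional weight spaces, $O_{p,q}(t)w_0$ is automatically a scalar multiple of $w_{pq}$, so $f_{p,q}(\lambda,\mu,t)$ is well defined; and since $L_{-I}w_0$ is, for each $I$ with $|I|=pq$, a product of affine-linear factors in $\mu$, one for each part of $I$, the function $f_{p,q}$ is a polynomial in $\mu$ whose coefficients are polynomials in $\lambda,t,t^{-1}$. The term $(L_{-1})^{pq}w_0=\bigl(\prod_{r=0}^{pq-1}(\mu+r)\bigr)w_{pq}$, whose coefficient in $O_{p,q}(t)$ is $1$ by the stated normalization, shows that $f_{p,q}$ is monic of degree exactly $pq$ in $\mu$, every other $L_{-I}$ contributing strictly lower degree. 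On the right-hand side of \eqref{apq}, the factor indexed by $(l,k)$ is invariant under $(l,k)\mapsto(-l,-k)$, and the unique index fixed by this involution — which occurs precisely when $p$ and $q$ are both odd — contributes the perfect square $(\mu+A_{p,q}(0,0))^2$; hence that product is the square of an explicit monic polynomial $g_{p,q}(\lambda,\mu,t)$ of degree $pq$ in $\mu$. It therefore suffices to prove $f_{p,q}=\pm g_{p,q}$, and since the leading coefficients agree it suffices to show $f_{p,q}$ and $g_{p,q}$ have the same zeros with multiplicity. As everything is polynomial in $t,t^{-1}$, we may assume $t$ transcendental throughout.

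The base cases are immediate: $O_{1,1}(t)=L_{-1}$ gives $f_{1,1}=\mu=\mu+A_{1,1}(0,0)$, while $O_{1,2}(t)=L_{-1}^2-t^{-1}L_{-2}$ gives $f_{1,2}=\mu(\mu+1)-t^{-1}(\mu-\lambda)$, which one checks equals $(\mu+A_{1,2}(0,\tfrac12))(\mu+A_{1,2}(0,-\tfrac12))-\lambda\theta^2$, the case $(2,1)$ following by $t\leftrightarrow t^{-1}$. For general $(p,q)$ a first route is induction on $p+q$: the modules with weights $h_{p,q}(t)$ are generated under fusion by those with weights $h_{1,2}(t)$ and $h_{2,1}(t)$, and using the known closed forms for the $(1,q)$ and $(p,1)$ singular vectors one can realize $O_{p,q}(t)$ through lower singular vectors acting on composite modules. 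Translating this into a statement about the matrix element yields a recursion expressing $f_{p,q}$ as a finite sum over admissible intermediate weights in terms of $f_{p,q-1}$ and $f_{1,2}$ (respectively $f_{p-1,q}$ and $f_{2,1}$); one then verifies that $g_{p,q}^2$ satisfies the same recursion, the inductive step reducing to a finite product and partial-fraction identity among the quadratics in \eqref{apq}.

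A second, more computational route is to use the Feigin--Fuchs bosonization: embed $M(c(t),h)$ into a Heisenberg Fock module with background charge $\alpha_0$ fixed by $c(t)=1-12\alpha_0^2$. Under this embedding $O_{p,q}(t)$ becomes, up to a nonzero scalar, a screened vertex operator, and the matrix element defining $f_{p,q}$ turns into a Dotsenko--Fateev contour integral over $p-1$ and $q-1$ screening variables, i.e.\ a Selberg-type integral whose screening charges are parametrized by $\theta=\sqrt{-t^{-1}}$. Evaluating this integral in terms of $\Gamma$-functions — equivalently, reading its zero locus off the exponents of the integrand — reproduces exactly the quadratic factors of \eqref{apq}, the shifts $A_{p,q}(l,k)$ appearing as the combinations of momenta dictated by the $pq$ insertion points of the screened operator.

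In either route, once the zeros of $f_{p,q}$ as a polynomial in $\mu$ over $\mathbb{C}(\lambda,t)$ are shown to coincide, with multiplicity, with those of $g_{p,q}$, monicity forces $f_{p,q}=g_{p,q}$, and squaring gives \eqref{apq}. The genuine work is precisely this last point: in the inductive route it is the bookkeeping of the fusion recursion together with the identity needed to see that the explicit product survives it; in the Coulomb-gas route it is the evaluation of the multivariable screening integral; and in both cases one must rule out spurious zeros, i.e.\ confirm that the a priori uncontrolled lower-order-in-$\mu$ corrections to $f_{p,q}$ conspire to produce precisely the stated factorization, given that one initially controls only the degree and leading coefficient.
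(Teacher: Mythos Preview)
The paper does not prove this proposition: it is quoted as a result of Feigin--Fuchs \cite{FF}, with the statement taken from Kent's exposition \cite{Ke}. So there is no proof in the paper to compare against; what you have written is an attempt to supply one.

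Your preliminary reductions are correct and worth keeping. The observation that $f_{p,q}$ is monic of degree $pq$ in $\mu$ (from the normalized $(L_{-1})^{pq}$ term, every other $L_{-I}$ with $|I|=pq$ having strictly fewer parts) is right, as is the observation that the right-hand side of \eqref{apq} is the square of a monic polynomial of degree $pq$ in $\mu$, since the factors are invariant under $(l,k)\mapsto(-l,-k)$ and the unique fixed index (present only when $p,q$ are both odd) already contributes a square. Your verification of the base cases $(1,1)$ and $(1,2)$ is also correct.

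However, what you have submitted is an outline, not a proof, and you say so yourself in the final paragraph. Neither of your two routes is carried out. For the inductive route you assert the existence of a recursion for $f_{p,q}$ in terms of $f_{p,q-1}$ and $f_{1,2}$ via fusion, but you do not write it down, nor do you verify that $g_{p,q}^2$ satisfies it; the phrase ``a finite product and partial-fraction identity among the quadratics'' names the shape of the missing computation without performing it. For the Coulomb-gas route you correctly identify that the Feigin--Fuchs bosonization turns the matrix element into a Selberg-type integral, but you do not set up the integral, fix the contours, or evaluate it; ``reading its zero locus off the exponents of the integrand'' is again a description of what needs to be done, not the doing of it. In particular, the step you flag as ``the genuine work'' --- ruling out spurious zeros and confirming the exact factorization --- is entirely absent.

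If you want to promote this to a proof, the most efficient path is the second route: write the singular vector as a product of screening operators acting on the Fock vacuum, express $\langle w_{pq}^*, O_{p,q}(t) w_0\rangle$ in $\mathcal{D}_{\lambda,\mu}$ as an explicit $(p-1)+(q-1)$-fold contour integral, and evaluate it using the Selberg integral. This is what \cite{FF} does, and Kent \cite{Ke} gives a clean account; citing those sources, as the paper does, is the honest alternative to reproducing that computation.
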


\subsection{Vertex algebras}

For a $\mathbb{C}$-algebra $R$, we will denote by $R((z))$ the space of Laurent series, namely
\begin{align*}
R((z))=\left\{ \dsum_{l\in \mathbb{Z}} a_l z^{l} \ | \ a_l\neq 0 \textrm{ for finitely many negative $l$}\right\}.
\end{align*}
On the other hand, if $W$ is a $\mathbb{C}$-vector space, we denote by $W\{z\}$ the space of $W$-valued formal series involving the rational powers of $z$. Namely,
\begin{align*}
W\{z\}=\left\{\dsum_{n\in \mathbb{Q}} w_n z^{n}   \ | \ w_n\in W \right\}.
\end{align*} 
\begin{definition}
	Let $V$ be a vector space $\mathbb{C}$ and ${\rm End}V$ the algebra of linear operators on $V.$
	We say that a power series
	\begin{align*}
	a(z)=\dsum_{j\in \mathbb{Z}}a_jz^{-j} \in {\rm End}V[[z^{\pm 1}]]
	\end{align*}
	is a {\it field} if for any $v \in V$ $a_jv=0$ for $j>>0.$ That is, for each $v \in V$,
	\begin{align*}
	a(z)v \in V((z)).
	\end{align*}
\end{definition}
\begin{definition}\cite{FLM}
	A vertex algebra $\va1$ consists of a $\mathbb{C}$-vector space $V$,
a distinguished vector $\vacuum \in V$ called the vacuum vector,
 an operator $T:V\longrightarrow V$  and a linear map \
		\begin{align*}
		Y(\ .\ , z): V \longrightarrow {\rm End\ } V[[z^{\pm 1}]]
		\end{align*}
		taking each $a \in V$ to a field acting on $V,$
		\begin{align*}
		Y(a,z)=\dsum_{n\in \mathbb{Z}}a_nz^{-n-1}.
		\end{align*}

	These data are subject to the following axioms for all $a, b \in V$:\\
	\begin{enumerate}
		\item Vacuum axioms
		\begin{align*}
		&Y(\vacuum, z)=Id_V\\
		&Y(a,z) \vacuum \in V[[z]] \ \ \textrm{ and } \ \ Y(a,z)\vacuum|_{z=0}=a,
		\end{align*}
		\item Translation axioms
		\begin{align*}
		&[T,Y(a,z)]=\ddz Y(a,z)\\
		&T\vacuum=0
		\end{align*}
		\item The Jacobi identity 
		\begin{align}
		z^{-1}\dxyz Y(a,x)&Y(b,y)c-z^{-1} \dmyxz Y(b,y)Y(a,x)c=\nonumber \\
		&y^{-1}\dxzy Y(Y(a,z)b,y)c\label{jacobi}.
		\end{align}
	\end{enumerate}
\end{definition} 	
\begin{definition}
	A vertex operator algebra $V$ is a $\mathbb{Z}$-graded vertex algebra
	\begin{align*}
	V=\coprod_{n \in \mathbb{Z}} V_n \ \ \textrm{ with ${\rm dim}V_n<\infty$, $n={\rm wt} v$ and $V_n=0$ for n sufficiently small,}
	\end{align*}
	together with a homogeneous conformal vector $\omega\in V_2$ satifying the following:\\
	If $L(n):=\omega_{n+1}$ for $n \in \mathbb{Z}$ so that  $Y(\omega,z)=\dsum_{n\in \mathbb{Z}}L(n) z^{-n-2}$ then,
	\begin{align*}
	[L(m),L(n)]=(m-n)L(m+n)+\frac{1}{12}(m^3-m)\delta_{m+n,0}c
	\end{align*}
	for $c \in \mathbb{Q}$ which we call the rank of $V.$ Moreover, the action  of the Virasoro algebra is compatible with the gradation and the action of $T$. Namely, $L(0)v=nv=({\rm wt}v)v$ for $n\in \mathbb{Z}$, $v\in V_n$ and $L(-1)=T.$ The conformal dimension of an operator $Y(a,z)$ is $m$ if $a\in V_m.$ Namely, $a_{n} |_{V_l}\subset V_{l+m-n-1}$
\end{definition}

\begin{definition}
	A module $W$ for a vertex operator algebra $V$ is a $\mathbb{Q}$-graded vector space
	\begin{align}
	W=\coprod_{n\in \mathbb{Q}} W_n \textrm{ where for $w\in W_{n}$ we write ${\rm wt} w=n$}\label{uni}
	\end{align}
	such that
	\begin{align}
	&{\rm dim} \ W_{n}<\infty \textrm{ for $n \in \mathbb{Q} $ and} \nonumber \\
	&W_n=0 \textrm{ for $n$ sufficiently small}\label{doli}
	\end{align}
		together with a linear map
	\begin{align*}
	Y_W(\ . \ , z) : \ &V \longrightarrow ({\rm End} \ W)[[z,z^{-1}]]\\
	& v \longmapsto Y_W(v,z)
	\end{align*}
	such that $Y_W(v,z)$ is a field for each $v \in V$ and the following axioms hold:
	\begin{align*}
	Y_W(\vacuum, z)=Id_W
	\end{align*}
	\begin{align*}
	z^{-1}\dxyz Y_W(a,x)&Y_W(b,y)-z^{-1}\dmyxz Y_W(b,y)Y_W(a,x)=\\
	&y^{-1}\dxzy Y_W(Y(a,z)b,y),
	\end{align*}
		
	for all $a,b \in V$,
	\begin{align*}
	Y_W(\omega,z)=\dsum_{n\in \mathbb{Z}} \mathfrak{l}_W (n)z^{-n-2}
	\end{align*}
	
	satisfies 	
	\begin{align*}
	[\mathfrak{l}_W(m),\mathfrak{l}_W(n)]=(m-n)\mathfrak{l}_W(m+n)+\frac{1}{12}(m^3-m)\delta_{m+n,0}\rm{rank} V,
	\end{align*}
	
	\begin{align} \label{treli}
	{\mathfrak{l}_W}(0) w=({\rm wt } w) w \textrm{ \ \ \ for homogeneous $w\in W$}
	\end{align}
	and
	\begin{align*}
	\ddz Y_W(v,z)=Y_W(L(-1)v,z).
	\end{align*}
	
\end{definition}	
\begin{definition}
	A weak module $W$ for a vertex algebra $V$ satisfies all the axioms of a $V$-module except possibly (\ref{uni}), (\ref{doli}) or (\ref{treli}).
\end{definition}

\begin{definition}\cite{FLM}
Let $V$ be a vertex operator algebra and let $W$ be a $V$-module. The contragredient module $W^{'}:=\bigoplus_{n\in \mathbb Q}{\rm Hom}(W_n, \mathbb{C}) $ has the following $V$-module structure:	
\begin{align*}
\langle Y_{W^{'}}'(a,z)f,w \rangle=\langle f, Y_W(e^{xL(1)}(-x^{-2})^{L(0)}a, z^{-1})w \rangle,
\end{align*}
for $a\in V$, $f\in W'$, $m \in W$, where $\langle\ ,\ \rangle$ denotes the usual paring between a vector space and its dual vector space. 
\end{definition}
\begin{definition}
	A vertex algebra $V$ is called simple if it is irreducible as a $V$-module.
\end{definition}
\begin{definition}
	A vertex algebra $V$ is rational if it has finitely many non-isomorphic irreducible modules and every finitely generated module is a direct sum of irreducibles.
\end{definition}
 \subsubsection{The Vertex algebra $L(c,0)$.} For $c \in \mathbb{C}$ we first define the $Vir$-module $M_c$ as 

\begin{align*}
M_c:=  U(Vir^-) \tt _{U(Vir^\geq)} \mathbb{C}_{c} 	
\end{align*}
where
\begin{align*}
Vir^{\geq}:=\bigoplus_{n\geq-1}\mathbb{C}L_n \oplus \mathbb{C}C,
\end{align*}
\begin{align*}
Vir^{-}:=\bigoplus_{n\leq-2}\mathbb{C}L_n ,
\end{align*}
where $\mathbb{C}_c=\mathbb{C} \vac_0 $ is one dimenensional and its $Vir^{\geq}$-module structure is  given by $L_n \vac_0=0$ for $n\geq -1$ and $C\vac_0=c\vac_0$. Given the linear isomorphism $M_c\cong U(Vir^-)$ we use this identification to denote elements of $M_c$.

The vertex operator algebra structure in $M_c$ is given by:
\begin{enumerate}
\item[a)] Gradation
\begin{align*}
{\rm{wt}}(L_{-n_{1}}L_{-n_{2}}\cdots L_{-n_k}\vac_0)=  \dsum_{i=1}^{k} n_i, \ \ \ \textrm{  $n_1\geq n_2\geq\cdots \geq n_k\geq2,$} 
\end{align*}
\item[b)]
vacuum vector $ \vacuum=\vac_0,$ 
\\ \item[c)] translation operator $T:=L_{-1}$\\
\item[d)] conformal vector $\omega=L_{-2} \vac_0$, \\ 
\item [e)]vertex operators defined by
$Y(\vac_0, z):=Id$,

\begin{align}
Y(L_{-n_1 - 2} L_{-n_2 -2} \cdots L_{-n_k-2} \vac_0)=\frac{1}{n_1!\, n_2!\, \cdots \, n_k!}: \d^{n_1}l(z)\cdots\d^{n_k}l(z):
\end{align}
where $l(z)=\dsum_{n \in \mathbb{Z}} L_n z^{-n-2}$. 
\end{enumerate}
\begin{remark}\label{n+1} 
	Note that if we write $Y(\omega,z)=\dsum_{n \in \mathbb{Z}}\omega_n z^{-n-1} $, then $\omega_{n+1}=L_n$. 
\end{remark}

Using the description of singular vectors for all Verma modules given by Feigin and Fuchs \cite{FF} we have the following result which determines the vertex algebra structure of $L(c,0)$ for $c \in \mathbb{C}$:  
\begin{lemma}
	Let $c_{p,q}=1-\frac{6(p-q)^2}{pq}$ . Then,
	\begin{enumerate}
		\item[i)] $L(c, 0)=M_c$ if and only if $c\neq c_{p,q}$ for any  $p,q\geq 2$, $(p,q)=1$.
		\item[ii)] $L(c_{p,q}, 0)=M_{c_{p,q}}/\langle v_{p,q}\rangle$, where $v_{p,q}$ denotes the only (up to scalar) singular vector in $M_{c_{p,q}}$ and $\langle v_{p,q}\rangle$ denotes the submodule generated by this singular vector. 
	\end{enumerate}
\end{lemma}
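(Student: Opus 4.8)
The statement to prove is the classification of when the vacuum Verma module $M_c$ is already irreducible as the vertex algebra $L(c,0)$, and when it is not, the identification of the maximal ideal as the submodule generated by the unique singular vector $v_{p,q}$. The key observation is that $L(c,0)$ is by definition the irreducible quotient $M(c,0)/J(c,0)$ realized as a vertex operator algebra (via \cite{FZ}), while $M_c$ is the ``universal'' vacuum module built from $Vir^{-}$. Since $M_c$ is generated by $\vac_0$ with $L_n\vac_0=0$ for $n\geq -1$, and $M(c,0)$ is generated by $\vac_{c,0}$ with $L_n\vac_{c,0}=0$ for $n>0$ together with $L_0\vac_{c,0}=0$, the vector $L_{-1}\vac_{c,0}$ is itself a singular vector of $M(c,0)$ at level $1$ (it has weight $1$ and is annihilated by all $L_n$, $n>0$, because $h=0$). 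Thus I would first reduce the problem to $M_c$ by showing $M_c\cong M(c,0)/\langle L_{-1}\vac_{c,0}\rangle$, so that the submodule structure of $M_c$ is governed by the singular vectors of $M(c,0)$ of level $\geq 2$.

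\textbf{Step 1: reduce to the structure of $M(c,0)$.} I would invoke the Feigin--Fuchs singular vector criterion recalled in Section~\ref{svff}: by the formula \eqref{ct}, a singular vector of level $N=pq$ exists in $M(c,h)$ exactly when $c=c(t)$ and $h=h_{p,q}(t)$ for some $t$ and $p,q\in\mathbb{Z}_+$. Specializing to $h=0$, I would determine precisely for which $c$ the module $M(c,0)$ carries a singular vector of level $\geq 2$, i.e. beyond the trivial one at level $1$ coming from $L_{-1}\vac_{c,0}$. Solving $h_{p,q}(t)=0$ together with $c(t)=c$ and translating the resulting Diophantine/rationality condition into the classical form gives exactly $c=c_{p,q}=1-\tfrac{6(p-q)^2}{pq}$ with $p,q\geq 2$ coprime. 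This is the standard minimal-series/rationality computation, and I would cite \cite{FF} for the full determination of which $M(c,0)$ are reducible (beyond the level-$1$ vector) and the fact that, when reducible in this range, there is a unique (up to scalar) singular vector generating the maximal proper submodule.

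\textbf{Step 2: prove (i) and (ii) as a dichotomy.} For (i), if $c\neq c_{p,q}$ for all admissible $p,q$, then by Step~1 the module $M(c,0)$ has no singular vector of level $\geq 2$, so after quotienting by $\langle L_{-1}\vac_{c,0}\rangle$ the resulting module $M_c$ is irreducible; hence $L(c,0)=M_c$. I would make precise the claim that the only submodule of $M(c,0)$ strictly between $\langle L_{-1}\vac_{c,0}\rangle$ and $M(c,0)$ must itself be generated by a singular vector, using that $M(c,0)$ has at most a filtration by singular vectors. For (ii), when $c=c_{p,q}$, the unique singular vector $v_{p,q}\in M_{c_{p,q}}$ of level $pq$ generates the maximal proper submodule, and I would show that $\langle v_{p,q}\rangle$ is precisely the kernel of the surjection $M_{c_{p,q}}\twoheadrightarrow L(c_{p,q},0)$, so that $L(c_{p,q},0)=M_{c_{p,q}}/\langle v_{p,q}\rangle$. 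The identification of the image of $v_{p,q}$ in $M_c$ with the genuine singular vector uses that $v_{p,q}$ has $L_0$-weight $pq\geq 2$, so it survives the quotient by the level-$1$ vector.

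\textbf{The main obstacle.} The delicate point is not the existence computation but the \emph{uniqueness and maximality}: showing that when $c=c_{p,q}$ there is, up to scalar, only one singular vector in $M_{c_{p,q}}$ and that the submodule it generates is the \emph{entire} maximal proper submodule $J$, so that the irreducible quotient is exactly $M_{c_{p,q}}/\langle v_{p,q}\rangle$. In general Verma modules for $Vir$ can have embedding patterns with several singular vectors (the chains and ``two-sided'' cases of Feigin--Fuchs), and one must verify that for the vacuum weight $h=0$ with $(p,q)$ coprime the embedding diagram degenerates so that a single singular vector suffices to generate $J$. I would handle this by appealing directly to the Feigin--Fuchs classification of embedding diagrams \cite{FF} specialized to $h_{p,q}(t)=0$: the coprimality condition $(p,q)=1$ forces $c_{p,q}$ into the nondegenerate (rational, minimal-model) case where the maximal submodule of the vacuum module is singly generated. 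This is where I would spend the most care, making explicit why coprimality and the vacuum weight rule out the more complicated multiply-generated configurations.
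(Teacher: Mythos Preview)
Your proposal is essentially correct and follows the same approach the paper takes, namely an appeal to the Feigin--Fuchs classification \cite{FF}; in fact the paper does not supply a proof of this lemma at all but simply states it as a consequence of \cite{FF}, so your sketch is already more detailed than what appears there. One minor imprecision worth tightening: the existence of a singular vector of level $\geq 2$ in $M(c,0)$ is not quite the same as reducibility of $M_c$, since such a vector could in principle lie inside the submodule $\langle L_{-1}\vac_{c,0}\rangle\cong M(c,1)$; you need that it survives in the quotient $M_c$, which again comes out of the Feigin--Fuchs embedding diagrams you are already invoking.
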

Therefore, $L(c,0)$ has a vertex algebra structure given by that on $M_c$ when $c\neq c_{p,q}$ or by the quotient vertex algebra structure on $M_c/\langle v_{p,q}\rangle$ when $c=c_{p,q}$. 
\begin{remark}
It was conjectured in \cite{FZ} and proved in \cite{W} that the vertex Virasoro algebra $L(c,0)$ is rational if and only if $c=c_{p,q}$ for $p,q\geq 2$ and $(p,q)=1$. In particular, this implies that $L(25,0)$ is an irrational Virasoro vertex algebra. We will show in this work that, as in the case of $L(1,0)$, $L(25,0)$ admits a natural distinguished family of countably many irreducibles.	
\end{remark}
It was proved in \cite{FZ} that $M(c,h)$ is an $L(c,0)$-module with $Y_W=Y$.
 \subsubsection{The Zhu algebra and Intertwining operators} \label{222}
We need to recall some definitions and results from \cite{Z} and \cite{FZ} following the exposition from \cite{W}:

If $V$ is a vertex operator algebra, Zhu proved (with a slightly modified formula, as explained in Remark \ref{zhur}  ) that $V$ has an  algebra structure with the bilinear map $\ * \ $ determined by
\begin{align*}
a*b={\rm Res}_z \left( Y(a,z) \frac{(1-z)^{{\rm wt }a }}{z} b\right)
\end{align*} 
for homogeneous $a$ and for all $b$ $\in V$.
Also, recall that if $O(V)$ is defined as the linear span of elements of the form 
\begin{align*}
{\rm Res}_z\left(Y(a,z)\frac{(1-z)^{{\rm wt }a}}{z^2} b \right)
\end{align*}
for $a$, $b$ $\in V$ with $a$ homogeneous, then $O(V)\subset V$ is a two sided  ideal with the algebra strucute given by $*$.

\begin{definition}\cite{Z}
	The Zhu algebra of $V$ is defined as $A(V)=V/O(V)$.
\end{definition}
$A(V)$ is a unitary associative algebra for any vertex operator algebra $V$ and $[\omega]$ is in its center. (We will denote by $[a]$ the projection on $A(V)$ of an element $a \in V$). 
If $W$ is a $V$-module, we define the $A(V)$-module $A(W)$ with the analogous formulas:
First,  we set the following left and right action of $V$ on $W$ by
\begin{align*}
&a.w= {\rm Res}_z \left(Y(a,z)\frac{(1-z)^{{\rm wt }a}}{z} w \right)\\
&w.a={\rm Res}_z \left(Y(a,z)\frac{(1-z)^{{\rm wt }a-1}}{z} w \right),
\end{align*}
for any homogenous $a \in V$ and $w \in W.$
Next, we let $O(W)$ be the subspace of $W$ linearly spanned by all elements of the form
\begin{align*}
{\rm Res}_z \left(Y(a,z)\frac{(1-z)^{{\rm wt }a}}{z^2} w \right)
\end{align*}
for homogenous $a \in V$ and $w \in W$.
Finally, we define $A(W)=W/O(W)$. It was proved in \cite{FZ} that $A(W)$ is an  $A(V)$-bimodule.

\begin{lemma}\label{W0}\cite{Z1}
Let $V$ be a vertex operator algebra and let $W$ be a lowest weight $V$-module with lowest weight $h$ and with $L(0)$-weight decomposition $W=\bigoplus_{n\geq 0} W_{(n+h)}$. Define $W(n):=W_{(h+n)}$ and call $W(n)$ the degree $n$ subspace of $W$. 
Then, $W(0)$ is a natural $A(V)$-module with the action determined by the fact that $[a]$ acts on $W(0)$ by $(-1)^{{\rm deg}a}a_{{\rm wt}a-1}$ for any homogeneous $a \in V$.
\end{lemma}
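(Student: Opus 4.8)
The plan is to produce a unital algebra homomorphism $\pi\colon A(V)\to{\rm End}\,W(0)$ realizing the stated action. Since $A(V)=V/O(V)$, I would first define $\pi$ on all of $V$ by $\pi(a)=(-1)^{{\rm deg}\,a}\,a_{{\rm wt}\,a-1}$ for homogeneous $a$, extended linearly, and then check three things: that $\pi(a)$ actually maps $W(0)$ into $W(0)$, that $\pi$ annihilates $O(V)$ (so it descends to $A(V)$), and that the descended map is multiplicative and sends $[\one]$ to the identity. The first point is immediate from the conformal grading: the module axiom $\ddz Y_W(v,z)=Y_W(L(-1)v,z)$ together with the Virasoro action yields $[\mathfrak{l}_W(0),a_n]=({\rm wt}\,a-n-1)a_n$, so $a_n$ sends $W(k)$ into $W(k+{\rm wt}\,a-n-1)$. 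Hence, for $w\in W(0)$, the truncation $W(j)=0$ for $j<0$ forces $a_n w=0$ whenever $n\geq{\rm wt}\,a$, while the single degree-preserving mode $a_{{\rm wt}\,a-1}$ maps $W(0)$ to $W(0)$. For the unit, $Y_W(\one,z)={\rm Id}$ gives $\one_{-1}={\rm Id}$ and ${\rm wt}\,\one=0$, so $\pi(\one)={\rm Id}_{W(0)}$.

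The engine for the two remaining points is the iterate (associativity) formula, a consequence of the Jacobi identity \eqref{jacobi}: for homogeneous $a,b$ and any integer $k$,
\begin{align*}
(a_m b)_k=\sum_{j\geq 0}(-1)^j\binom{m}{j}\Bigl(a_{m-j}b_{k+j}-(-1)^m b_{m+k-j}a_j\Bigr).
\end{align*}
To show $\pi$ annihilates $O(V)$, I would expand $a\circ b={\rm Res}_z\,Y(a,z)\tfrac{(1-z)^{{\rm wt}\,a}}{z^2}b=\sum_{i\geq0}(-1)^i\binom{{\rm wt}\,a}{i}a_{i-2}b$ into its homogeneous pieces, apply $\pi$ to each $a_{i-2}b$ (whose degree-preserving mode is $(a_{i-2}b)_{{\rm wt}\,a+{\rm wt}\,b-i}$), and rewrite every resulting term via the iterate formula as a combination of the $a_p b_q$ acting on a fixed $w\in W(0)$. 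The truncation from the first paragraph discards all terms with $p\geq{\rm wt}\,a$ or $q\geq{\rm wt}\,b$, and a binomial resummation of the survivors (Vandermonde-type identities together with $\sum_i(-1)^i\binom{n}{i}$ cancellations) shows the total vanishes on $W(0)$. Thus $\pi$ factors through $A(V)=V/O(V)$.

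Multiplicativity is the analogous but finer computation. Writing $a*b=\sum_{i\geq0}(-1)^i\binom{{\rm wt}\,a}{i}a_{i-1}b$ and applying $\pi$ term by term, the sign $(-1)^{{\rm deg}}$ built into $\pi$ and the expansion factor $(-1)^i$ conspire to cancel, reducing the claim to the mode identity $\sum_{i\geq0}\binom{{\rm wt}\,a}{i}(a_{i-1}b)_{{\rm wt}\,a+{\rm wt}\,b-i-1}\,w=a_{{\rm wt}\,a-1}\,b_{{\rm wt}\,b-1}\,w$ for $w\in W(0)$. Expanding the left side with the iterate formula and deleting every mode that violates the truncation bounds on $W(0)$, all but one term drop out, and the survivor is exactly $a_{{\rm wt}\,a-1}b_{{\rm wt}\,b-1}w=\pi(a)\pi(b)w$, up to the sign that cancels as above. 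A convenient way to organize both computations is to observe that for $w\in W(0)$ the series $Y_W(a,z)w$ has a pole of order at most ${\rm wt}\,a$ at $z=0$, so $*$ and $\circ$ become genuine contour integrals and the binomial resummations are repackaged as residue manipulations.

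The main obstacle is precisely the bookkeeping in the last two paragraphs: after invoking the iterate formula one is left with a double sum over the expansion index $i$ and the iteration index $j$, and one must verify that the truncation on $W(0)$ together with the binomial identities leaves nothing in the $O(V)$ case and a single term in the multiplicativity case. Tracking the interplay between the modified convention — the factor $(1-z)$ in the products and the sign $(-1)^{{\rm deg}\,a}$ in the action, cf.\ \reref{zhur} — and the iterate formula, so that all signs cancel correctly, is the delicate part; the rest is forced by the conformal grading.
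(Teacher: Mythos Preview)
The paper does not supply a proof of this lemma: it is quoted verbatim from Zhu \cite{Z1} and used as a black box, so there is no ``paper's own proof'' to compare against. Your proposal is precisely the standard argument that Zhu gives (for the convention $(1+z)$ and action $a_{{\rm wt}\,a-1}$), transported to the modified convention of \reref{zhur}; the three steps you isolate --- grading preservation, vanishing on $O(V)$, and multiplicativity via the iterate formula and truncation on $W(0)$ --- are exactly the bones of his proof, and your observation that the extra $(-1)^i$ from $(1-z)^{{\rm wt}\,a}$ is absorbed by the $(-1)^{{\rm deg}}$ in the action is the only new wrinkle.

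As written, your proposal is a correct outline rather than a complete proof: you identify the right double sum and the right truncation, but you do not actually perform the binomial resummation that collapses it (you defer to ``Vandermonde-type identities'' and ``all but one term drop out''). That is the entire content of the computation, and it is not automatic --- in particular, the $O(V)$ case requires tracking that the second family of terms $b_{\bullet}a_j$ in the iterate formula does \emph{not} vanish mode-by-mode on $W(0)$ (since $a_jw$ can land in $W(k)$ with $k>0$), and one must really carry out the cancellation across $i$ and $j$. If you want this to stand as a proof rather than a plan, write out the double sum explicitly, swap the order of summation, and exhibit the binomial identity that kills it; otherwise cite \cite[Theorem~2.1.2]{Z1} and \cite[Lemma~1.2]{L2} for the full details.
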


\begin{lemma}\cite{Z} \label{muz}
	Let $V$ be a vertex operator algebra. Let $W_1$ be a submodule of the $V$-module $W$. Then, $A(W/W_1)\cong A(W)/[W_1]$, where $[W_1]$ denotes the image of $W_1$ under the projection $W\twoheadrightarrow A(W).$
\end{lemma}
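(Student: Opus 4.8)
The plan is to realize the claimed isomorphism as the map induced on Zhu bimodules by the canonical projection $\pi : W \surjto W/W_1$, and then to identify its kernel with $[W_1]$. The whole argument rests on the functoriality of the construction $O(-)$ under $V$-module maps, so the first step is to record that $\pi$ is a surjective homomorphism of $V$-modules intertwining the vertex operators, $\pi(Y_W(a,z)w) = Y_{W/W_1}(a,z)\pi(w)$ for all $a\in V$ and $w\in W$.

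Next I would prove that $\pi(O(W)) = O(W/W_1)$. Since the generators of $O(W)$ and of $O(W/W_1)$ are residues of the form $\mathrm{Res}_z\bigl(Y(a,z)\frac{(1-z)^{\mathrm{wt}\,a}}{z^2}(-)\bigr)$ with $a\in V$ homogeneous (and $\mathrm{wt}\,a$ is computed in $V$, hence unaffected by passing to the quotient of $W$), the intertwining property gives both inclusions: $\subseteq$ is immediate, while $\supseteq$ holds because every $\bar w\in W/W_1$ equals $\pi(w)$ for some $w$, so each generator of $O(W/W_1)$ is the $\pi$-image of the corresponding generator of $O(W)$. Consequently $\pi$ descends to a surjective linear map $\bar\pi : A(W)=W/O(W) \surjto (W/W_1)/O(W/W_1) = A(W/W_1)$.

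The heart of the proof is the computation of $\ker\bar\pi$. An element $[w]\in A(W)$ lies in $\ker\bar\pi$ exactly when $\pi(w)\in O(W/W_1)$, that is, $w\in \pi^{-1}\bigl(O(W/W_1)\bigr)$. Combining the identity $O(W/W_1)=\pi(O(W))$ from the previous step with the elementary fact $\pi^{-1}(\pi(S)) = S + \ker\pi$ for a linear map, and using $\ker\pi = W_1$, I obtain $\pi^{-1}\bigl(O(W/W_1)\bigr) = O(W)+W_1$. Hence $\ker\bar\pi = (O(W)+W_1)/O(W) = [W_1]$, the image of $W_1$ under $W\surjto A(W)$, and the first isomorphism theorem yields the vector-space isomorphism $A(W)/[W_1] \isoto A(W/W_1)$.

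Finally I would upgrade this to an isomorphism of $A(V)$-bimodules. Because $W_1$ is a $V$-submodule we have $Y_W(a,z)w\in W_1((z))$ whenever $w\in W_1$, so the left and right actions $a.w$ and $w.a$ preserve $W_1$; thus $[W_1]$ is a sub-$A(V)$-bimodule of $A(W)$ and $A(W)/[W_1]$ is a well-defined bimodule. Since the two actions on $A(W)$ and on $A(W/W_1)$ are given by the same residue formulas and $\pi$ intertwines the vertex operators, $\bar\pi$ is a bimodule homomorphism, making the induced map an isomorphism of $A(V)$-bimodules. I expect the main obstacle to be the kernel computation in the third step — precisely verifying the two inclusions in $\pi(O(W))=O(W/W_1)$ and the preimage identity — since everything else is formal once the functoriality of $O(-)$ under module maps is established.
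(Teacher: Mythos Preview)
Your argument is correct and is the standard way to prove this result: use functoriality of the $O(-)$ construction under $V$-module maps, show the canonical projection $\pi$ induces a surjection $\bar\pi:A(W)\to A(W/W_1)$, identify $\ker\bar\pi=(O(W)+W_1)/O(W)=[W_1]$ via $\pi^{-1}(\pi(O(W)))=O(W)+W_1$, and check bimodule compatibility. Each step is valid as you have written it.

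There is nothing to compare against in the paper itself, however: the lemma is stated with the attribution \cite{Z} and no proof is given here --- it is simply quoted from Zhu's thesis as a known fact. So your proposal supplies an argument where the paper gives none. The approach you outline is exactly the one that underlies the original reference, so there is no divergence in method to discuss.
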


For later use we recall the definition of intertwining operators:
\begin{definition}\cite{FHL}
	Let $W_1$, $W_2$ and $W_3$ be a triple of modules for a vertex operator algebra $V.$ An intertwining operator $\mathcal{Y}$ of type $\binom{W_3}{W_1 \ \ W_2}$ is a map
	\begin{align*}
	\mathcal{Y}:W_1 \tt W_2 \longrightarrow W_3\{z\}
	\end{align*}
that satisfies the following properties:
	
	1) Truncation: For any $w_i \in W_i,$ $i=1,2,$
	\begin{align*}
	(w_1)_n w_2=0,
	\end{align*}
	for $n>>0.$
	
	2) $L(-1)$-derivative: For any $w_1\in W_1$,
	\begin{align*}
	\mathcal{Y}(L(-1)w_1, z)=\ddz \mathcal{Y}(w_1,z),
	\end{align*}
	
	3) Jacobi identity: In Hom$(W_1\tt W_2, W_3)\{x,y,z\}$ we have
	\begin{align*}
	z^{-1}\dxyz Y_{W_3}(u,x)&\mathcal{Y}(w_1,y)-\dyxmz\mathcal{Y}(w_1,y)Y_{W_2}(u,x)=\\
	&y^{-1}\dxzy \mathcal{Y}(Y_{W_1}(u,z)w_1,y),
	\end{align*}
	for $u\in V$ and $w_1 \in W_1.$
\end{definition}
We denote the space of all the intertwining operators of the type $\binom{W_3}{W_1 \ \ W_2}$ by $I\binom{W_3}{W_1 \ \ W_2}.$

We recall the following important result due to \cite{FZ} and refined in \cite{L1}, \cite{L2}:

\begin{lemma}\label{Zhu}
	Let $V$ be a vertex operator algebra and let $W_1, W_2$ and $W_3$ be lowest weight $V$ modules. If $W_3$ is irreducible, then 
	\begin{align*}
	{\rm dim \ I} \binom{W_3}{W_1 \ \ W_2}\leq {\rm  dim \ Hom}_{A(V)}\left( A(W_1)\tt_{A(V)} W_2(0), W_3(0)\right). 
	\end{align*}		
\end{lemma}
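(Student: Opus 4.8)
The plan is to attach to each intertwining operator its top coefficient, seen as a linear map $W_2(0)\to W_3(0)$, to show this datum descends to an element of ${\rm Hom}_{A(V)}(A(W_1)\otimes_{A(V)}W_2(0),W_3(0))$, and to show the resulting assignment is linear and injective; the inequality then follows by comparing dimensions. Write $h_1,h_2,h_3$ for the lowest weights of $W_1,W_2,W_3$. Given $\mathcal{Y}\in I\binom{W_3}{W_1\ \ W_2}$ and $w_1\in W_1$ homogeneous of weight $h_1+j$, expand $\mathcal{Y}(w_1,z)=\sum_k(w_1)_kz^{-k-1}$; by the $L(-1)$-derivative and Jacobi axioms the mode $(w_1)_k$ shifts $L(0)$-weight by $h_1+j-k-1$, so the top mode that can act nontrivially on $W_2(0)$ — the one with $k=h_1+j+h_2-h_3-1$ — is precisely the one sending $W_2(0)$ into $W_3(0)$. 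Let $o(w_1)\colon W_2(0)\to W_3(0)$ be that mode, extended linearly to $W_1$.

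First I would check that $o$ vanishes on $O(W_1)$ and is compatible with the bimodule structure; these are the residue computations behind Zhu's construction of $A(V)$, $A(W_1)$ and the action on $W(0)$ of \leref{W0}. Feeding the Jacobi identity for $\mathcal{Y}$ into them and keeping only the component that lands in $W_3(0)$, one obtains: $o$ annihilates $O(W_1)$, hence factors through $A(W_1)=W_1/O(W_1)$; and $o(u\cdot w_1)=o_{W_3}(u)\circ o(w_1)$, $o(w_1\cdot u)=o(w_1)\circ o_{W_2}(u)$ for $u\in V$, where $u\cdot w_1$ and $w_1\cdot u$ are the left and right actions of $V$ on $W_1$ that define the bimodule $A(W_1)$ and $o_W(u)$ is the action of $[u]$ on $W(0)$ from \leref{W0}. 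Together these say that $[w_1]\otimes\bar w_2\mapsto o(w_1)\bar w_2$ is balanced over $A(V)$ and commutes with the left $A(V)$-action, so it defines $\bar o_{\mathcal{Y}}\in{\rm Hom}_{A(V)}(A(W_1)\otimes_{A(V)}W_2(0),W_3(0))$, and $\mathcal{Y}\mapsto\bar o_{\mathcal{Y}}$ is visibly linear.

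It remains to prove injectivity, and this is the only substantial step, the one that uses irreducibility of $W_3$. Suppose $\bar o_{\mathcal{Y}}=0$, i.e.\ $o(w_1)w_2^{0}=0$ for all $w_1\in W_1$ and $w_2^{0}\in W_2(0)$. One first shows $\mathcal{Y}(w_1,z)w_2^{0}=0$: each coefficient $(w_1)_kw_2^{0}$ lies in some graded piece $W_3(l)$, and the intertwiner commutators $[u_m,\mathcal{Y}(w_1,z)]=\sum_{i\ge0}\binom{m}{i}z^{m-i}\mathcal{Y}(u_iw_1,z)$ (consequences of the Jacobi identity), applied for suitable $u\in V$, carry $(w_1)_kw_2^{0}$ down into $W_3(0)$, where it vanishes because $u_mw_2^{0}=0$ for $m\ge{\rm wt}\,u$ and because the hypothesis kills every $o(\,\cdot\,)w_2^{0}$; irreducibility of $W_3$ is what allows an induction on $l$ to upgrade this to the vanishing of the coefficient itself. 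Then, since $W_2$ is generated from $W_2(0)$ under the modes of $V$, the same commutators on the $W_2$-side reduce $\mathcal{Y}(w_1,z)w_2$ for arbitrary $w_2$ to the expressions just shown to vanish, so $\mathcal{Y}=0$. This bootstrapping — propagating the vanishing of the top coefficients through all weight levels, while keeping track of the fact that every power of $z$ in $\mathcal{Y}$ lies in the single coset $h_1+h_2-h_3+\mathbb{Z}$ — is the delicate point; it is what Frenkel--Zhu \cite{FZ} carry out and Li \cite{L1,L2} sharpen, and I would ultimately invoke their argument. Note finally that the lemma gives only an inequality; the reverse inequality, needed for the fusion-rule computation, must be obtained separately by constructing intertwining operators that realize the bound.
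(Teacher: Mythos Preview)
The paper does not prove this lemma at all: it is stated as a result ``due to \cite{FZ} and refined in \cite{L1}, \cite{L2}'' and simply cited. Your sketch is exactly the Frenkel--Zhu/Li argument those references contain---define the zero-mode map $o$, show it descends to $A(W_1)\otimes_{A(V)}W_2(0)$, and prove injectivity using irreducibility---and you yourself acknowledge that the bootstrapping step is where you would invoke their argument, so there is nothing to compare.
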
	
Finally, we mention the following lemma which will be useful in future computations. Its proof appears in \cite{HL} and \cite{L1}.
\begin{lemma}\label{HLL}
	Let $W_i$ for $i=1,2,3,$  be lowest weight modules for a vertex operator algebra $V$. Then 
\end{lemma}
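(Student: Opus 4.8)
The content to establish — following \cite{FHL}, \cite{HL}, \cite{L1} — is the system of symmetry isomorphisms between spaces of intertwining operators obtained by permuting $W_1,W_2,W_3$ and passing to contragredients; concretely, for lowest weight $W_1,W_2,W_3$ one has $I\binom{W_3}{W_1\ W_2}\cong I\binom{W_3}{W_2\ W_1}$ and $I\binom{W_3}{W_1\ W_2}\cong I\binom{(W_2)'}{W_1\ (W_3)'}$, so that $\dim I\binom{W_3}{W_1\ W_2}$ is invariant under the resulting $S_3$-action on the triple (after identifying each module with its double contragredient). Because the irreducible Virasoro modules occurring later satisfy $L(c,h)'\cong L(c,h)$, this will let us evaluate $\dim I\binom{L(c,h_3)}{L(c,h_1)\ L(c,h_2)}$ after reordering $h_1,h_2,h_3$ in whichever way is most convenient for combining with \leref{Zhu}. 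Throughout, write $h_i$ for the lowest weight of $W_i$; every $\mathcal{Y}\in I\binom{W_3}{W_1\ W_2}$ then has exponents in $h_3-h_1-h_2+\mathbb{Z}$, and I would fix once and for all a branch of $z^{h_3-h_1-h_2}$ so that $\mathcal{Y}(w_1,-z)w_2$ and $\mathcal{Y}(w_1,z^{-1})w_2$ are unambiguous.

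The plan is to write down the two generating maps explicitly and verify they land in the claimed spaces. Define the skew-symmetry map $\Omega\colon I\binom{W_3}{W_1\ W_2}\to I\binom{W_3}{W_2\ W_1}$ by
\[(\Omega\mathcal{Y})(w_2,z)w_1:=e^{zL(-1)}\,\mathcal{Y}(w_1,-z)\,w_2,\]
and the adjoint map $A\colon I\binom{W_3}{W_1\ W_2}\to I\binom{(W_2)'}{W_1\ (W_3)'}$, in the normalization matching the contragredient-module formula above, by
\[\big\langle (A\mathcal{Y})(w_1,z)\,w_3',\ w_2\big\rangle:=\big\langle w_3',\ \mathcal{Y}\big(e^{zL(1)}(-z^{-2})^{L(0)}w_1,\ z^{-1}\big)w_2\big\rangle .\]
For each of $\Omega\mathcal{Y}$ and $A\mathcal{Y}$ the three intertwining-operator axioms must be checked: truncation is inherited from $\mathcal{Y}$; the $L(-1)$-derivative property follows for $\Omega$ from $[L(-1),e^{zL(-1)}]=0$ and the chain rule, and for $A$ from differentiating $e^{zL(1)}(-z^{-2})^{L(0)}$ in $z$ together with the $L(-1)$-derivative property of $\mathcal{Y}$ and the defining relation of the contragredient module; and the Jacobi identity is derived from the Jacobi identity for $\mathcal{Y}$ by the substitution $y\mapsto -y$ (for $\Omega$) respectively the inversion $y\mapsto y^{-1}$ (for $A$), using the conjugation formula $e^{zL(-1)}Y_W(u,x)e^{-zL(-1)}=Y_W(u,x+z)$ in the first case and the fact that $e^{zL(1)}(-z^{-2})^{L(0)}$ implements the coordinate inversion on vertex operators in the second, together with the elementary transformation rules for the formal $\delta$-function expressions under these substitutions.

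Next I would check that $\Omega$ and $A$ are linear isomorphisms: each admits a two-sided inverse given by the analogous formula made with the opposite branch, which reduces for $\Omega$ to $e^{zL(-1)}e^{-zL(-1)}=\mathrm{Id}$ and for $A$ to $e^{zL(1)}(-z^{-2})^{L(0)}\,e^{z^{-1}L(1)}(-z^{2})^{L(0)}=\mathrm{Id}$ acting degreewise — here the lowest weight hypothesis is what guarantees that $L(0)$ acts semisimply with spectrum bounded below, so these exponentials are well defined on each graded piece — together with the canonical identification $(W')'\cong W$. Composing $\Omega$ and $A$ then produces the remaining symmetry isomorphisms and yields the asserted equalities of dimensions.

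The step I expect to be the main obstacle is the Jacobi-identity verification, the only part that is not formal bookkeeping: for $A$ it requires the precise identity describing how $e^{zL(1)}(-z^{-2})^{L(0)}$ conjugates $Y_{W_1}(u,z)$ — i.e.\ that this operator intertwines the given vertex operator with its coordinate-inverted counterpart — followed by a careful matching of the three $\delta$-function terms after the substitution $y\mapsto y^{-1}$, including reconciliation of the fixed branch of $z^{h_3-h_1-h_2}$ on both sides; the analogous point for $\Omega$ is the interaction of the translation conjugation formula with the sign change. This is exactly the computation carried out in \cite{FHL} for $\mathbb{Z}$-graded modules and extended to lowest weight modules with possibly irrational conformal weights in \cite{HL} and \cite{L1}, and no new difficulty arises in the present setting beyond keeping the branch fixed throughout.
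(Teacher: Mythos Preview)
The paper does not prove this lemma; it only states that ``its proof appears in \cite{HL} and \cite{L1}.'' Your sketch is precisely the standard construction from those references---the skew-symmetry map $\Omega$ and the contragredient/adjoint map $A$, with the Jacobi identity checked via the translation conjugation formula and the $e^{zL(1)}(-z^{-2})^{L(0)}$ inversion identity---so there is nothing substantive to compare: you have supplied exactly the argument the paper is citing.
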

\begin{align*}
{\rm dim} \ I\binom{W_3}{W_1, \ \ W_2}={\rm dim} \ I\binom{W_3}{W_2, \ \ W_1}={\rm dim} \ I\binom{W_2^{'}}{W_1, \ \ W_3^{'}}.
\end{align*}

\begin{definition} \cite{L2}\label{gv}
	Let $V$ be a vertex operator algebra, let $\hat{V}=V \tt \mathbb{C}[t, t^{-1}]$ and $d=L(-1)\tt 1+1\tt \frac{d}{dt}$. Then the lie algebra of $V$ is $g(V):=\hat{V}/d\hat{V}$ with bracket determined by  
	\begin{align*}
	[a(m), b(n)]=\dsum_{i=0}^\infty \binom{m}{i}(a_ib)(m+n-i),
	\end{align*}	
	where we denote $a(m)=a\tt t^m+dV.$
	If we set deg$a(m)={\rm {wt}} a-m-1$ then we have a triangular decomposition $g(V)=g(V)_-\oplus g(V)_0 \oplus g(V)_+.$
\end{definition}
If $U$ is a $g(V)_0$-module, then we define \cite{L2}
\begin{align*}
F(U)={\rm Ind}_{U(g(V)_+\oplus g(V)_0)}^{U(g)}U. 
\end{align*}
where $g(V)_+$ acts by $0$.
We define $\bar{F}(U)=F(U)/J(U)$ where $J(U)$ denotes the intersection of all the kernels of all $g(V)$-homomorphisms from $F(U)$ to weak modules.

 \begin{remark} \label{zhur}
 	In the original definition of the Zhu algebra \cite{Z} the product is given by 
 	\begin{align*}
 	a\bar{*}b={\rm Res}_z \left( Y(a,z) \frac{(1+z)^{{\rm wt }a }}{z} b\right)
 	\end{align*} 
 	for homogeneous $a$ and for all $b$ $\in V$
 	while $\bar{O}(V)$ is defined as the linear span of the elements of the form 
 	\begin{align*}
 	{\rm Res}_z\left(Y(a,z)\frac{(1+z)^{{\rm wt }a}}{z^2} b \right).
 	\end{align*}
 	In this case, an element $[\bar a]  \in \bar{A}(V):=V/\bar{O}(V)$ acts on the top level of a module $W$ as $a_{{\rm wt}a-1}$ for homogeneous $a\in V$ and everything in Section \ref{222} works in a parallel way. We work with the slightly modified formula presented in Section \ref{222} for convenience in the computations.
 \end{remark}
\section{The fusion rules for $L(25, 0)$}
Consider the vertex operator algebra $L(25,0)$ and let us denote by $\mathcal F_{25}$ its distinguished family of irreducible modules which are not Verma modules. Namely  $\mathcal F_{25}:=\{L(25, 1-\frac{n^2}{4})| \  n\geq 2\}.$
\begin{remark}
	Note that $L(25,1)$ and $L(25, 1-\frac{1}{4})$ are (irreducible) Verma modules and therefore, they do not belong in $\mathcal{F}_{25}$. 	
\end{remark}
Let $m, n, r\geq 2$. We want to describe the space of intertwining operators
\begin{align*}
I\binom{L(25, 1-r^2/4)}{ L(25, 1-m^2/4) \ \ L(25, 1-n^2/4) }.
\end{align*}

From the irreducibility of the non equivalent $L(25,0)$ modules $L(25,1-\frac{n^2}{4}), n \geq 2 $ it follows that

${\rm dim \ } I\binom{L\left(25, 1-\frac{k^2}{4}\right)}{ L(25, 0) \ \ L\left(25, 1-\frac{n^2}{4}\right) }=	 \begin{cases} 
1 & \textrm{\rm if } k=n  \\       
0 & \textrm{\rm otherwise}  
\end{cases}$

which together with Lemma \ref{HLL} implies that  

${\rm dim \ } I\binom{L\left(25, 1-\frac{k^2}{4}\right)}{ L\left(25, 1-\frac{n^2}{4}\right) \ \ L(25,0) }=	 \begin{cases} 
1 & \textrm{\rm if } k=n  \\       
0 & \textrm{\rm otherwise.}  
\end{cases}$

Therefore, we assume that $m,n\geq 3$ from now on.

We want to use Lemma \ref{Zhu} to bound the fusion rules. We first recall the $A(V)$ structure in the case of the Virasoro algebra of central charge 25:  From \cite{FZ} and \cite{W}, we know that
\begin{lemma}\label{w1}
	If $\omega\in L(25,0)$ is the conformal vector $\omega=L_{-2}\vac_{0}$, then there exists an isomorphism of associative algebras 
	\begin{align}\label{w1}
	A(L(25,0))&\cong \mathbb{C}[y] \\
	[\omega]^n&\mapsto y^n, \label{y2}
	\end{align}
	where $[\omega]=[(L_{-2}-L_{-1})\vac_{0}]$.		
\end{lemma}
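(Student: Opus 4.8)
The plan is to compute the Zhu algebra $A(L(25,0))$ directly from the general machinery of Section~\ref{222}, exactly as in the analogous computation for $L(1,0)$. First I would recall from \cite{FZ} that for the universal object $M_c$ (here $c=25$), the Zhu algebra $A(M_c)$ is a polynomial algebra $\mathbb{C}[x]$ with generator $[\omega]$: this follows because $M_c$ is freely generated by $\{L_{-n}\}_{n\geq 2}$ acting on the vacuum, and using the relations in $g(V)$ together with the defining formula for $O(M_c)$ one shows that modulo $O(M_c)$ every element is a polynomial in $[\omega]$, while no polynomial relation holds. The key computational input is the standard identity $[L_{-n}v] = \big[\big((-1)^n\tbinom{\mathrm{wt}\,v + n - 1}{n}(L(-2)+\cdots)\big)v\big]$ type recursion — more precisely, the fact (already in \cite{Z}, \cite{W}) that $[\omega]$ together with the recursion $L_{-n-1}v \equiv \big((-1)^{n}\big(\tbinom{...}{...}\big)\cdots\big)$ lets one reduce any monomial $L_{-n_1}\cdots L_{-n_k}\vac_0$ to a polynomial in $[\omega]$. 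I would cite this rather than reprove it, since it is the content of \cite{W}.

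Next I would pass from $M_{25}$ to $L(25,0)$. By \leref{muz} (Zhu's result that $A(W/W_1)\cong A(W)/[W_1]$), we have $A(L(25,0)) \cong A(M_{25})/[J(25,0)]$ where $J(25,0)$ is the maximal ideal. By the first lemma on the vertex algebra structure of $L(c,0)$ in the excerpt, $25 = c(t)$ with, e.g., $t$ irrational, is \emph{not} of the form $c_{p,q}$ for integers $p,q\geq 2$; hence $M_{25}$ has no singular vector at positive level generating a proper submodule — wait, more carefully: $M_{25}=M_c$ is the vacuum Verma-type module $U(Vir^-)\otimes\mathbb{C}_c$, and the relevant statement is part (i) of that lemma, giving $L(25,0)=M_{25}$. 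Therefore $J(25,0)=0$ and $A(L(25,0))\cong A(M_{25})\cong \mathbb{C}[x]$. Renaming the generator $x$ as $y$ and tracking that under the reduction the class of $\omega=L_{-2}\vac_0$ equals the class of $(L_{-2}-L_{-1})\vac_0$ (since $L_{-1}\vac_0 = 0$ in $M_{25}$, or more to the point since $L_{-1}v \in O(V)$-adjustments are already built into the modified product of Section~\ref{222}, cf.\ \reref{zhur}) gives the stated formula $[\omega]=[(L_{-2}-L_{-1})\vac_0] \mapsto y$, and $[\omega]^n\mapsto y^n$.

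The main obstacle is not conceptual but bookkeeping: verifying carefully that the reduction modulo $O(V)$ genuinely kills everything except polynomials in $[\omega]$, and that there are no hidden relations — i.e., that the surjection $\mathbb{C}[y]\twoheadrightarrow A(L(25,0))$ is injective. Injectivity is where one must use that $L(25,0)$ has infinitely many distinct irreducible lowest-weight modules $L(25,h)$ (the family $\mathcal{F}_{25}$ together with the generic Verma modules), so by \leref{W0} each gives a one-dimensional $A(L(25,0))$-module on which $[\omega]$ acts by the distinct scalars $h$; infinitely many distinct characters force $A(L(25,0))$ to be infinite-dimensional, ruling out any relation $p([\omega])=0$. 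Everything else is the routine transcription of the $c=1$ argument of \cite{W} to $c=25$, which is legitimate precisely because both central charges fail the rationality condition $c=c_{p,q}$, so in both cases $L(c,0)=M_c$ and the Zhu algebra is the full polynomial ring.
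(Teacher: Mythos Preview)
The paper offers no proof of this lemma; it simply attributes the result to \cite{FZ} and \cite{W} and states it. Your proposal correctly reconstructs the standard argument from those references --- compute $A(M_c)\cong\mathbb{C}[y]$ by reducing every PBW monomial to a polynomial in $[\omega]$, then observe that $L(25,0)=M_{25}$ because $25\neq c_{p,q}$ --- so your approach is exactly what the paper is implicitly invoking. One small correction: $c=25$ corresponds to $t=-1$ in \eqref{ct}, not to an irrational $t$; the clean reason $25\neq c_{p,q}$ is that $c_{p,q}=1-6(p-q)^2/(pq)\leq 1$ for $p,q\geq 2$, which you arrive at anyway after your ``wait, more carefully''.
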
	

Moreover, as shown in \cite{FZ} we have the following description of the $\mathbb{C}[y]$-module structure for the module $A(W(25, h))$ for any $h\in \mathbb{C}$: 
\begin{lemma} \label{w2}
	As a $\mathbb{C}[y]$-module	
	\begin{align}
	A(M(25,h))\cong \mathbb{C}[x,y]
	\end{align}
	The isomorphism is given by
	\begin{align*}
	\mathbb{C}[x,y]&\cong A(M(25,h))\\
	x^ny^m&\mapsto [(L_{-2}-2L_{-1}+L_0)^n][\omega]^m[\vac_{h}] 
	\end{align*}

In this isomorphism the lowest weight vector is identified with $1\in \mathbb{C}[x,y]$ and the actions left and right actions of $C[y]$ are described by
\begin{align} \label{xy2}
y.p(x,y)=xp(x,y), \ \ \ \ \ p(x,y).y=yp(x,y)
\end{align}
for $p(x,y) \in \mathbb{C}[x,y].$

\end{lemma}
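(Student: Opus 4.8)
The plan is to realize the two variables of $\mathbb{C}[x,y]$ as the \emph{left} and \emph{right} actions of $[\omega]$ on the bimodule $A(M(25,h))$, and then to show that the induced bimodule homomorphism $\phi:\mathbb{C}[x,y]\to A(M(25,h))$ is a vector space isomorphism. The first step is a residue computation. Using the left and right actions $a.w$ and $w.a$ from Section~\ref{222} together with $Y_W(\omega,z)=\sum_n L_n z^{-n-2}$ (recall $Y_W=Y$ on $M(25,h)$), extracting the coefficient of $z^{-1}$ gives
\begin{align*}
\omega.w=(L_{-2}-2L_{-1}+L_0)w,\qquad w.\omega=(L_{-2}-L_{-1})w.
\end{align*}
Hence the left action of $[\omega]$ is the operator $L_{-2}-2L_{-1}+L_0$ and the right action is $L_{-2}-L_{-1}$. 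Since the two actions commute (bimodule axiom) and $A(L(25,0))\cong\mathbb{C}[y]$ is generated by $[\omega]$ by Lemma~\ref{w1}, the assignment sending $x^ny^m$ to $(\text{left }[\omega])^n(\text{right }[\omega])^m[\vac_h]=[(L_{-2}-2L_{-1}+L_0)^n][\omega]^m[\vac_h]$ is a well-defined $\mathbb{C}[y]$-bimodule homomorphism $\phi$ with $\phi(1)=[\vac_h]$, realizing exactly the rules (\ref{xy2}). It then remains to prove $\phi$ is bijective.

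For surjectivity I would invoke the standard property that ${\rm Res}_z\!\big(Y(a,z)\tfrac{(1-z)^{{\rm wt}a+s}}{z^{2+s}}w\big)\in O(M(25,h))$ for every homogeneous $a$, every $w$, and every $s\ge 0$. Taking $a=\omega$ produces, for each $s\ge 0$, a relation whose most negative mode is $L_{-3-s}$; solving it expresses $L_{-3-s}w$ modulo $O(W)$ in terms of $L_{-j}w$ with $j\le 2+s$. Iterating on the largest mode index reduces every PBW monomial $L_{-n_1}\cdots L_{-n_k}\vac_h$ to the span of $\{[L_{-2}^aL_{-1}^b\vac_h]\}_{a,b\ge0}$. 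Finally, because the symbols of the left and right $[\omega]$-actions are the linearly independent forms $\ell_2-2\ell_1$ and $\ell_2-\ell_1$ in $\mathbb{C}[\ell_1,\ell_2]$, the degree-$d$ products $(\ell_2-2\ell_1)^n(\ell_2-\ell_1)^m$ span the same space as the monomials $\ell_2^a\ell_1^b$; solving this (purely formal, invertible) triangular system degree by degree rewrites each $[L_{-2}^aL_{-1}^b\vac_h]$ as a combination of the $\{x^ny^m\}$, so $\phi$ is onto.

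The hard part will be injectivity, i.e. that the vectors $\{x^ny^m\}$, equivalently $\{L_{-2}^aL_{-1}^b\vac_h\}$ (the change of basis above being invertible), are linearly independent in $A(M(25,h))$. This amounts to showing that $O(M(25,h))$ imposes \emph{no} relation among the monomials $L_{-2}^aL_{-1}^b\vac_h$ beyond the mode-reduction relations already used. The delicate point is that $V=L(25,0)=M_{25}$ is generated by $\omega$, so $O(W)$ contains the further elements $a\circ w$ with $a=L_{-k_1}\cdots L_{-k_j}\vac_0$; I must verify that these are consequences of the $\omega$-recursions and never collapse two distinct monomials $L_{-2}^aL_{-1}^b\vac_h$. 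I expect this to be the main obstacle, and I would make it rigorous by producing a faithful realization: letting the commuting operators $L_{-2}-2L_{-1}+L_0$ and $L_{-2}-L_{-1}$ act on a parametrized family of $Vir$-modules with free parameters — e.g. the density modules $\mathcal{D}_{\lambda,\mu}$ of Section~\ref{F} (or the Verma modules $M(25,h')$) — and checking that the images $\phi(x^ny^m)$ evaluate to linearly independent polynomials in the module parameters, forcing $\ker\phi=0$.

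Combining the two directions, $\phi$ is a $\mathbb{C}[y]$-bimodule isomorphism $\mathbb{C}[x,y]\xrightarrow{\sim}A(M(25,h))$ identifying the lowest weight vector with $1$ and with left/right $[\omega]$-multiplication given by (\ref{xy2}), as claimed.
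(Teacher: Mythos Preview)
The paper does not supply its own proof of this lemma: it is quoted directly from \cite{FZ} (see the sentence ``Moreover, as shown in \cite{FZ}\ldots'' immediately preceding the statement), so there is no in-paper argument to compare against. Your write-up is therefore a reconstruction of the Frenkel--Zhu result rather than an alternative to anything in the present paper.

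On the substance of your sketch: the residue computations for the left and right actions of $[\omega]$ are correct, and the surjectivity argument via the $O(W)$ relations together with the invertible change of basis between $\{L_{-2}^aL_{-1}^b\vac_h\}$ and $\{(L_{-2}-2L_{-1}+L_0)^n(L_{-2}-L_{-1})^m\vac_h\}$ is exactly the route taken in \cite{FZ} and \cite{W}. The only genuine gap is the one you flag yourself, injectivity. Your proposed test via density modules does not work as stated: the modules $\mathcal{D}_{\lambda,\mu}$ carry central charge $0$, so they are not $L(25,0)$-modules, and evaluating Virasoro monomials on them detects linear independence in $U(Vir)$, not in the quotient $M(25,h)/O(M(25,h))$. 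Your parenthetical alternative of using the family $M(25,h')$ is on firmer ground, since these are $L(25,0)$-modules and the associated top-level pairings coming from intertwining operators do factor through $A(M(25,h))$; developing that would close the gap. The argument actually given in \cite{FZ} is different: one filters $M(c,h)$ by $U(Vir_-)$-degree and checks that at the associated graded level the generators $a\circ w$ of $O(M(c,h))$ contribute exactly the ideal coming from the $\omega$-relations, so no further collapse among the $L_{-2}^aL_{-1}^b\vac_h$ can occur.
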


We are interested in Hom$\left(A(L(25,1-\frac{m^2}{4}))\tt_{\mathbb{C}[y]} L(25,1-\frac{n^2}{4})(0), L(1,1-\frac{r^2}{4})(0)\right)$ so we first analyze 
\begin{align*}
A\left(M\left(25,1-\frac{m^2}{4}\right)\right)\tt_{\mathbb{C}[y]} L\left(25,1-\frac{n^2}{4}\right)\big(0\big)
\end{align*}

It is easy to see that as a $\mathbb{C}[y]$ -module \begin{align}\label{1r2}
L\left(25,1-\frac{n^2}{4}\right)(0)\cong \mathbb{C}_{1-\frac{n^2}{4}}
\end{align}
where we identify the image of the lowest weight vector $\vac_{1-\frac{n^2}{4}}$ with $1$ and have that $y.1=\left(1-\frac{n^2}{4}\right)1$.

We recall the following important results:

\begin{lemma}\label{1}\cite{Z} In the quotient space $A(L(25,0))=L(25,0)/O(L(25,0))$,
	\begin{align}
	[(L(-1)-L(0))v]=0
	\end{align}
	for any $v\in L(25,0).$
\end{lemma}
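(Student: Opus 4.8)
The statement to be proved is Lemma~\ref{1}: in the Zhu algebra quotient $A(L(25,0))=L(25,0)/O(L(25,0))$, one has $[(L(-1)-L(0))v]=0$ for every $v\in L(25,0)$. The plan is to reduce this to the defining relations of the Zhu algebra and the operator identity $L(-1)=T$. First I would recall that, writing $L(-1)=T$, the translation axiom together with the vertex-operator formalism gives $Y(L(-1)v,z)=\frac{d}{dz}Y(v,z)$; expanding $Y(v,z)=\sum_n v_n z^{-n-1}$ this yields $(L(-1)v)_n=-n\,v_{n-1}$ on the level of modes. This identity is the algebraic engine of the proof.

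Next I would bring in the structure of $O(V)$. Recall $O(L(25,0))$ is spanned by elements $\mathrm{Res}_z\bigl(Y(a,z)\frac{(1-z)^{\mathrm{wt}\,a}}{z^2}b\bigr)$ for homogeneous $a$. The key trick, which is standard in Zhu's theory, is to take $a=\omega$ (the conformal vector, $\mathrm{wt}\,\omega=2$) and $b=v$, and to also use the description of the left action $a\cdot w$ versus $w\cdot a$ on $A(W)$; for $V$ acting on itself these actions become left and right multiplication, and one compares $\omega * v$ with $v * \omega$. Since $Y(\omega,z)=\sum_n L(n)z^{-n-2}$, extracting residues of $Y(\omega,z)\frac{(1-z)^2}{z}v$ and $Y(\omega,z)\frac{(1-z)^2}{z^2}v$ produces, modulo $O(V)$, linear combinations of $L(-1)v$, $L(0)v$ and $L(1)v$ with explicit binomial coefficients. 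Because $[\omega]$ is central in $A(V)$, $[\omega * v]=[v * \omega]$, and carefully bookkeeping these two residue computations gives exactly the relation $[(L(-1)-L(0))v]\equiv 0$ in $A(V)$ (the $L(1)v$ terms cancel). Alternatively — and this is probably the cleanest route — one observes directly that for homogeneous $v$ of weight $k$, the element $\mathrm{Res}_z\bigl(Y(v,z)\frac{(1-z)^k}{z^2}\vac\bigr)$ lies in $O(V)$; using $Y(v,z)\vac\in V[[z]]$ with $Y(v,z)\vac|_{z=0}=v$ and the creation/translation axiom $Y(v,z)\vac=e^{zL(-1)}v$, one computes this residue to be the coefficient of $z$ in $(1-z)^k e^{zL(-1)}v$, namely $L(-1)v - k\,v = L(-1)v - L(0)v$ (using $L(0)v=kv$). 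Hence $[(L(-1)-L(0))v]=0$ for homogeneous $v$, and by linearity for all $v$.

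The main obstacle — really the only subtle point — is getting the bookkeeping of binomial coefficients and the powers of $z$ exactly right, and making sure one uses the correct (modified) Zhu normalization $\frac{(1-z)^{\mathrm{wt}\,a}}{z^2}$ as in Section~\ref{222} rather than Zhu's original $\frac{(1+z)^{\mathrm{wt}\,a}}{z^2}$ from Remark~\ref{zhur}; the sign of the linear term in $(1-z)^k$ is what makes the relation come out as $L(-1)-L(0)$ rather than $L(-1)+L(0)$. I would also note that one should first prove the identity for $v$ homogeneous (where $L(0)v$ literally means $(\mathrm{wt}\,v)v$) and then extend by linearity, since $O(V)$ and the grading are compatible. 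Everything else is a direct residue extraction using the vacuum and translation axioms, so no deeper input about $L(25,0)$ (such as the singular-vector structure) is needed — the statement holds for any vertex operator algebra.
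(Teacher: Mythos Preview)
Your second (``cleanest'') argument is correct and is precisely the standard proof: for homogeneous $v$ of weight $k$, the element
\[
\mathrm{Res}_z\!\left(Y(v,z)\,\frac{(1-z)^k}{z^2}\,\vacuum\right)
\]
lies in $O(V)$ by definition, and since $Y(v,z)\vacuum=e^{zL(-1)}v$, this residue is the coefficient of $z$ in $(1-z)^k e^{zL(-1)}v$, namely $L(-1)v-kv=(L(-1)-L(0))v$. You are also right that the $(1-z)$ (rather than $(1+z)$) normalization is what produces the minus sign.

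Note that the paper itself does not give a proof of this lemma: it is simply quoted from \cite{Z}. Your argument is exactly the one in Zhu's original paper (adapted to the $(1-z)$ convention), so there is nothing to compare --- you have supplied the omitted standard proof. Your first sketch, via centrality of $[\omega]$ and comparing $\omega*v$ with $v*\omega$, is less direct and would require an additional identity (e.g.\ that $a*b-b*a\equiv \mathrm{Res}_z(1-z)^{\mathrm{wt}\,a-1}Y(a,z)b$ modulo $O(V)$) to close; since your second route already settles the matter cleanly, there is no need to pursue it.
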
	
\
\begin{lemma}\cite{W} \label{wang}
	In $A(L(25,0))$ we have that for $n\geq 1, \ v\in L(25,0)$
	\begin{align}
	[L(-n)v]=[(((n-1)L(-2)-L(-1))+L(0))v]. \label{ind}
	\end{align}	
\end{lemma}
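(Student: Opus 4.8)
The plan is to solve for $[L(-n)v]$ by converting the vanishing relations of $O(L(25,0))$ into a linear recursion in $n$ and then invoking \leref{1}. Since $25\neq c_{p,q}$, we have $L(25,0)=M_{25}$ and there is no nonzero singular vector, so it is enough to prove the identity for homogeneous $v$ and to compute directly with the modes $L(k)=\omega_{k+1}$ acting on $v$; the general statement then follows by linearity of the projection $v\mapsto[v]$. The cases $n=1,2$ serve as the base: for $n=2$ the claimed identity cancels the common term $[L(-2)v]$ and reduces to $[(L(-1)-L(0))v]=0$, which is precisely \leref{1}, and $n=1$ is a direct restatement of \leref{1}.

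For the inductive engine I would use the residue relation
\begin{align*}
\mathrm{Res}_z\left(Y(\omega,z)\,\frac{(1-z)^2}{z^m}\,v\right)\in O(L(25,0)),\qquad m\ge 2,
\end{align*}
which comes from the defining relations of $O(L(25,0))$ together with their higher-order analogues in \cite{Z} (for $m>2$ the denominator $z^m$ is reduced to the matched forms $ (1-z)^k/z^k$). Substituting $Y(\omega,z)=\sum_{k\in\mathbb{Z}}L(k)z^{-k-2}$ and extracting the coefficient of $z^{-1}$ collapses the three monomials of $(1-z)^2=1-2z+z^2$ into the single three-term relation
\begin{align*}
[L(-m-1)v]=2[L(-m)v]-[L(-m+1)v],\qquad m\ge 2,
\end{align*}
valid for every $v$. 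This recursion is the crux of the argument.

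Its characteristic polynomial is $x^2-2x+1=(x-1)^2$, whose double root forces $[L(-m)v]$ to be an affine function of $m$; with the two known values $[L(-1)v]$ and $[L(-2)v]$ as initial data this gives
\begin{align*}
[L(-m)v]=[L(-1)v]+(m-1)\big([L(-2)v]-[L(-1)v]\big).
\end{align*}
Expanding this affine solution and then reexpressing the boundary contribution through \leref{1} (trading a factor of $L(-1)$ for $L(0)$) delivers a closed form in $L(-2),L(-1),L(0)$, which is the content of (\ref{ind}). I expect the only genuinely vertex-algebraic input to be the membership in the second paragraph, namely that the residues with denominator $z^m$ (not just $z^2$) still lie in $O(L(25,0))$; once that is in hand the proof is the elementary solution of a constant-coefficient recursion. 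The delicate bookkeeping step, and the place to be most careful, is the final normalization via \leref{1} that repackages the affine answer into the stated combination of $L(-2)$, $L(-1)$ and $L(0)$.
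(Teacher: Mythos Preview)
The paper does not prove this lemma; it is quoted from Wang~\cite{W} (Lemma~2.2 there), translated to the $(1-z)$-convention of \reref{zhur}. Your argument is exactly Wang's: one checks that for every $m\ge 2$
\[
\mathrm{Res}_z\Bigl(Y(\omega,z)\,\frac{(1-z)^{2}}{z^{m}}\,v\Bigr)=L(-m-1)v-2L(-m)v+L(-m+1)v\in O(L(25,0)),
\]
solves the resulting constant-coefficient recursion with initial data $[L(-1)v]$, $[L(-2)v]$, and then rewrites via \leref{1}. So the strategy is correct and is the standard one. Your justification for the case $m>2$ is worded loosely; the precise input you want is the standard fact (Zhu, or Wang's Lemma~2.1(3) in this convention) that $\mathrm{Res}_z\bigl(Y(a,z)(1-z)^{\mathrm{wt}\,a}z^{-2-i}b\bigr)\in O(V)$ for all $i\ge 0$.

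One caution. The right-hand side of~\eqref{ind} as printed is $(n-1)L(-2)-L(-1)+L(0)$, but Wang's formula in this convention reads $(n-1)\bigl(L(-2)-L(-1)\bigr)+L(0)$, and that is what your recursion actually produces:
\[
[L(-n)v]=(n-1)[L(-2)v]-(n-2)[L(-1)v]=\bigl[\bigl((n-1)(L(-2)-L(-1))+L(0)\bigr)v\bigr],
\]
the last step by \leref{1}. The version literally printed already fails at $n=1$, since it would force $2[L(-1)v]=[L(0)v]$; thus your assertion that $n=1$ is ``a direct restatement of \leref{1}'' is valid only for the corrected parenthesization. With that parenthesis restored, your proof is complete and matches Wang's.
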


Using the method described by Milas in \cite{M} we get the following 
\begin{lemma}
	In $A(M(25,1-\frac{m^2}{4}))\tt_{\mathbb{C}[y]} L(25,1-\frac{n^2}{4})(0),$ 
	\begin{align}
	\left[L(-j_1)\cdots L(-j_k)v_{1-\frac{m^2}{4}}\right]=\prod_{r=1}^{k}\left(j_r\left(1-\frac{n^2}{4}\right)-y+\beta(r,k)\right)\left[v_{1-\frac{m^2}{4}}\right] \label{y12}\\
	=\prod_{r=1}^{k}\left(j_r\left(1-\frac{n^2}{4}\right)-x+\beta(r,k)\right)\left[v_{1-\frac{m^2}{4}}\right]\label{x2}
	\end{align}	
	where $\beta(r,k)=j_{r+1}+\cdots + j_k+ \left(1-\frac{m^2}{4}\right)$.
\end{lemma}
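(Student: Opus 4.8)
The plan is to prove the identity by induction on the number $k$ of Virasoro generators, following the method of Milas \cite{M}. Write $h=1-\tfrac{m^2}{4}$ and $h'=1-\tfrac{n^2}{4}$, and recall from \leref{w2} that $A(M(25,h))\cong\mathbb{C}[x,y]$ with $[v_{h}]\leftrightarrow 1$, left multiplication by $[\omega]$ acting as multiplication by $x$ and right multiplication by $[\omega]$ acting as multiplication by $y$. The crucial preliminary step is to establish, in $A(M(25,h))$, a reduction rule for a single Virasoro generator acting on an \emph{arbitrary} element $w\in M(25,h)$:
\begin{align}
[L(-j)w]=(jy-x)[w]+[L(0)w]\qquad (j\geq 1).\label{plankey}
\end{align}

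To obtain \eqref{plankey}, I would first treat $j=1,2$ directly from the definitions of the left and right $[\omega]$-actions on $A(M(25,h))$: expanding the residues $\mathrm{Res}_{z}\!\big(Y(\omega,z)\tfrac{(1-z)^{2}}{z}w\big)$ and $\mathrm{Res}_{z}\!\big(Y(\omega,z)\tfrac{1-z}{z}w\big)$ identifies $x[w]=[(L(-2)-2L(-1)+L(0))w]$ and $y[w]=[(L(-2)-L(-1))w]$; solving this $2\times 2$ system gives $[L(-1)w]=(y-x)[w]+[L(0)w]$ and $[L(-2)w]=(2y-x)[w]+[L(0)w]$, i.e.\ \eqref{plankey} for $j=1,2$. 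For $j\geq 3$ one uses the relations in $O(M(25,h))$ — the bimodule counterparts of \leref{1} and \leref{wang} — to rewrite $[L(-j)w]$ as a combination of $[L(-2)w]$ and $[L(-1)w]$, concretely $[L(-j)w]=[((j-1)L(-2)-(j-2)L(-1))w]$; substituting the $j=1,2$ cases and simplifying $(j-1)(2y-x)-(j-2)(y-x)=jy-x$ yields \eqref{plankey} in general.

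Granting \eqref{plankey}, the induction on $k$ is immediate. For $k=1$, apply \eqref{plankey} to $w=v_{h}$ and use $L(0)v_{h}=hv_{h}$ to get $[L(-j_1)v_{h}]=(j_1y-x+h)[v_{h}]$, which is the claimed formula since $\beta(1,1)=h$. For the step, put $w=L(-j_2)\cdots L(-j_k)v_{h}$; this vector is $L(0)$-homogeneous of weight $j_2+\cdots+j_k+h=\beta(1,k)$, so $[L(0)w]=\beta(1,k)[w]$ and \eqref{plankey} gives $[L(-j_1)w]=(j_1y-x+\beta(1,k))[w]$. Because $\beta(r,k)$ for $r\geq 2$ does not involve $j_1$, the inductive hypothesis applied to $(j_2,\dots,j_k)$ gives $[w]=\prod_{r=2}^{k}(j_ry-x+\beta(r,k))[v_{h}]$, and multiplying by the first factor produces $\prod_{r=1}^{k}(j_ry-x+\beta(r,k))[v_{h}]$ in $A(M(25,h))$. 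Finally I would pass to $A(M(25,h))\otimes_{\mathbb{C}[y]}L(25,h')(0)$: by \eqref{1r2} the right $[\omega]$-action $y$ becomes the scalar $h'=1-\tfrac{n^2}{4}$, which turns the last displayed product into \eqref{x2}; retaining the right-action variable $y$ in the other positions gives the companion expression \eqref{y12}.

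The main obstacle is the preliminary step: pinning down \eqref{plankey} for an \emph{arbitrary} element $w$, not just the lowest weight vector. This needs the precise form of the relations defining $O(M(25,h))$, which differ from those for $O(L(25,0))$ recorded in \leref{1} and \leref{wang} (indeed $[L(-1)w]=[L(0)w]$, valid in $A(L(25,0))$, fails in the bimodule). Once \eqref{plankey} is in hand, the only remaining care is the grading bookkeeping — checking that at each stage $L(-j_2)\cdots L(-j_k)v_{h}$ is homogeneous of exactly weight $\beta(1,k)$ so that the $[L(0)w]$ term supplies the correct shift and the product telescopes with the stated $\beta(r,k)$.
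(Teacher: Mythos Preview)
Your proposal is correct and follows essentially the same inductive strategy as the paper: peel off one $L(-j)$ at a time via a single-generator reduction, then iterate. The paper works directly in the tensor product and invokes \leref{wang} at the inductive step, whereas you first establish the reduction \eqref{plankey} in the bimodule $A(M(25,h))$ and only specialize $y\mapsto 1-\tfrac{n^2}{4}$ at the end; this is a cosmetic difference, not a different method.

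One point where your write-up is actually sharper than the paper's: you correctly flag that \leref{1} and \leref{wang} are stated for $A(L(25,0))$, not for the bimodule $A(M(25,h))$, and that the bimodule recursion is $[L(-j)w]=[((j-1)L(-2)-(j-2)L(-1))w]$ (equivalently the three-term relation $[L_{-j}w]-2[L_{-j+1}w]+[L_{-j-2}w]=0$ coming from ${\rm Res}_z\big(Y(\omega,z)\tfrac{(1-z)^2}{z^{2+n}}w\big)\in O(W)$). The paper's citation of \leref{wang} at this step is informal, since the extra relation $[L(-1)v]=[L(0)v]$ used to derive \leref{wang} holds in $A(V)$ but not in $A(W)$; your derivation of \eqref{plankey} from the $j=1,2$ base cases and the three-term recursion is the clean way to do it.

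Your closing paragraph slightly understates what you have already done: the ``main obstacle'' you mention---pinning down \eqref{plankey} for arbitrary $w$---is exactly what your residue computation for $j=1,2$ together with the $O(W)$ recursion accomplishes, so there is no remaining gap.
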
	 
\begin{proof}
	By \ref{xy2} it is clear that the right hand side of (\ref{y12}) and (\ref{x2}) are equivalent so it is enough to prove that (\ref{y12}) holds.
	We prove it by induction on $k$:
	If $k=1$ then we have that 
	\begin{align} \label{k12}
	\left[L(-j)v_{1-\frac{m^2}{4}}\right]=\left[L(-j) v_{1-\frac{m^2}{4}} \tt 1\right]
	\end{align}
	Next, using Lemma \ref{wang} together with (\ref{y2}),  (\ref{k12}) and (\ref{1r2})  we have that 
	\begin{align}\label{k112}
	\left[L(-j)v_{1-\frac{m^2}{4}}\right]&=\left[v_{1-\frac{m^2}{4}} \tt \left[(j-1)y+L(0)\right]1 \right]\\
	&=\left[v_{1-\frac{m^2}{4}} \tt \left[j\left(1-\frac{n^2}{4}\right)-y+L(0)\right]1\right]\nonumber \\
	&=\left[\left[j\left(1-\frac{n^2}{4}\right)-y+L(0)\right] v_{1-\frac{m^2}{4}}\tt 1\right]\\
	&=\left(j\left(1-\frac{n^2}{4}\right)-y+1-\frac{m^2}{4}\right)\left[v_{1-\frac{m^2}{4}}\right]
	\end{align}
	For $k>1$, 
	\begin{align}
	&\left[L(-j_1)L(-j_2)   \cdots L(-j_k)v_{1-\frac{m^2}{4}}\right]=\nonumber \\   & \ \ \ \left(j_1\left(1-\frac{n^2}{4}\right)-x+1-\frac{m^2}{4}+j_2+j_3\cdots+j_k\right)\left[L(-j_2)\cdots L(-j_k)v_{1-\frac{m^2}{4}}\right]\label{tucu2}
	\end{align}
	by the same computation as in (\ref{k112}). Finally, using the induction hypotesis we obtain (\ref{y12}).
	
\end{proof}

Following the analysis \cite{M}, we note that the term in the right hand side of (\ref{y12}) coincides with $P(j_1, \cdots j_k)$ where 
\begin{align}
L(-j_1)\cdots L(-j_k)w_o=P(j_1, \cdots j_k)w_{j_1+\cdots j_k} \label{s22}
\end{align}
if we fix $\l=\frac{n^2}{4}-1$ and $\m=2-\frac{n^2}{4}-\frac{m^2}{4}-x$ for $\mathcal{D}_{\l, \m}$ the density module as in Definition \ref{F}. 
%%%%%%%%%%%%%%%%%%%%%%%%%%%%%%%%%%%%%%%%%%%%%%%%%%%%%%%%%%%%%%%%%%%%%%%%%%%%%%%%%%%%%%%%%%%%%%%%%%%%%%%%%%%%%%%%%%%%%%%%%%%%%%%%%%%%%%%%%%%%%%%%%%%%%%%%%%%%%%%%%%%%%%%%%%%%%%%%%%%%%%%%%%%%%%%%%%%%%%%%%%%%%%%%%%%%%%%%%%%%%%%%%%%%%%%%%%%%%%%%%%%%%%%%%%%%%%%%%%%%%%%%%%%%%%%%%%%%%%%%%%%%%%%%%%%%%%%%%%%%%%%%%%%%%%%%

Next, we apply the theory of singular vectors from Section \ref{svff} to $M(25, 1-\frac{m^2}{4})$. 
From the work of Feigin and Fuchs \cite{FF} we know that  for each $m\geq 2$ there is a singular vector $v_{m-1}$ in $M(25, 1-\frac{m^2}{4})$ of weight $1-\frac{m^2}{4}+m-1$ such that if $\langle v_{m-1}\rangle$ denotes the submodule generated by $v_{m-1}$ then 
$M(25, 1-\frac{m^2}{4})/\langle v_{m-1}\rangle \cong L(25,1-\frac{m^2}{4} ) $. 
Note that in this case, $t=-1, \ \ p=m-1, \ \ q=1,\ \ \theta=1,$ in the notation of Section \ref{svff}
and (\ref{apq}) becomes 
\begin{align*}
&f_{m-1,1}^2\left(\frac{n^2}{4}-1,2-\frac{m^2+n^2}{4}-x, -1\right)=\\
&\dprod_{\substack{ -\frac{m-2}{2} \leq l\leq \frac{m-2}{2}  }}  \bigg(\left(2-\frac{m^2+n^2}{4}-x+A_{m-1,1}(l, 0)\right) \\ &\ \ \ \ \ \ \ \ \ \ \ \ \ \ \ \ \ \quad \times \left(2-\frac{m^2+n^2}{4}-x +A_{m-1,1}(-l, 0)\right)+4l^2-(ln)^2 \bigg).
\end{align*}
We have that 
\begin{align*}
A_{m-1, 1}(l,0)=\left(\frac{m}{2}-1+l \right)\left(\frac{m}{2}-l+1\right)=\left(\frac{m}{2}\right)^2-(1-l)^2
\end{align*}
while
\begin{align*}
A_{m-1, 1}(-l,0)=\left(\frac{m}{2}\right)^2-(1+l)^2,
\end{align*}
so that
\begin{align}
&f_{m-1,1}^2\left(\frac{n^2}{4}-1,2-\frac{m^2+n^2}{4}-x, -1\right)=\nonumber \\
&\prod_{\substack{ -\frac{m-2}{2} \leq l\leq \frac{m-2}{2}  }} \left(\left(2-\frac{n^2}{4}-x-(1-l)^2\right)\left(2-\frac{n^2}{4}-x -(1+l)^2
\right)+4l^2-(ln)^2 \right)= \nonumber \\
&\prod_{\substack{ -\frac{m-2}{2} \leq l\leq \frac{m-2}{2}  }} \left(\left(1-\frac{n^2}{4}-x-l^2+2l\right)\left(1-\frac{n^2}{4}-x  -l^2-2l
\right)+4l^2-(ln)^2 \right)= \nonumber \\
&\prod_{\substack{ -\frac{m-2}{2} \leq l\leq \frac{m-2}{2}  }}
\left(\left(1-\frac{n^2}{4}-x-l^2\right)^2-4l^2
+4l^2-(ln)^2 \right)=\nonumber \\
&\prod_{\substack{ -\frac{m-2}{2} \leq l\leq \frac{m-2}{2}  }}\left(1-\frac{n^2}{4}-x-l^2+ln\right)\left(1-\frac{n^2}{4}-x-l^2-ln\right)=\nonumber \\
&\prod_{\substack{ -\frac{m-2}{2} \leq l\leq \frac{m-2}{2}  }}\left(1-x-\left(\frac{n}{2}-l\right)^2\right)\left(1-x-\left(\frac{n}{2}+l\right)^2\right)=\nonumber \\
&\prod_{\substack{ -\frac{m-2}{2} \leq l\leq \frac{m-2}{2}  }}\left(1-x-\left(\frac{n}{2}-l\right)^2\right)^2=\nonumber \\
& \ \ \ \ \prod_{i\in \mathcal{L}_{m,n}}\left(x-\left (1-\frac{i^2}{4}\right)\right)^2,\label{opg}
\end{align}
where we denote
\begin{align}
\mathcal{L}_{m,n}:=\{m-n+2, m-n+4, \cdots, m+n-2\}.
\end{align}
\begin{remark}\label{pi} The central charge $c=25$ for the Virasoro vertex operator algebra is fundamental to obtaining the simplified formulas for the projection of singular vectors on density modules. This central charge, which corresponds to $t=-1$ in our computations, is what allows us to get formula (\ref{opg}). Note that the dual central charge, $c=1$, corresponds to $t=1$ and that the singular vector formulas become simplified only for both of these dual values of $t$.  
\end{remark}

We get the following result 
\begin{lemma} \label{lilita} Let $n,m \geq 3$. Then
	as an $A(L(25, 0))$-module 
	\begin{align*}
	A\left(L \left(25, 1-\frac{m^2}{4}\right)\right)\tt_{A(L(25, 0))}L\left(25, 1-\frac{n^2}{4}\right)(0)
	\end{align*} is isomorphic to 	
	\begin{align}
	{\mathbb{C}[x]}\big/ _{\big\langle \dprod _{i\in \mathcal{L}_{m, n}} (x-(1-\frac{i^2}{4}) \big\rangle}.\label{ngm}
	\end{align}
\end{lemma}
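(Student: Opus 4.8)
The plan is to compute the tensor product module $A(L(25,1-\tfrac{m^2}{4}))\otimes_{\mathbb{C}[y]}L(25,1-\tfrac{n^2}{4})(0)$ by starting from the analogous statement for Verma modules and quotienting out the singular vector, using Lemma~\ref{muz}. By Lemma~\ref{w2}, $A(M(25,1-\tfrac{m^2}{4}))\cong\mathbb{C}[x,y]$ as a $\mathbb{C}[y]$-bimodule, and by (\ref{1r2}) we have $L(25,1-\tfrac{n^2}{4})(0)\cong\mathbb{C}_{1-n^2/4}$ with $y$ acting by $1-\tfrac{n^2}{4}$. Tensoring over $\mathbb{C}[y]$ (using the \emph{right} action of $\mathbb{C}[y]$ on $A(M(25,1-\tfrac{m^2}{4}))$, which by (\ref{xy2}) is multiplication by $y$) kills the variable $y$, and the residual left action of $\mathbb{C}[y]$ is multiplication by $x$; hence $A(M(25,1-\tfrac{m^2}{4}))\otimes_{\mathbb{C}[y]}L(25,1-\tfrac{n^2}{4})(0)\cong\mathbb{C}[x]$. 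First I would record this identification carefully, making explicit that $\big[v_{1-m^2/4}\big]$ corresponds to $1\in\mathbb{C}[x]$ and that $x$ acts as $[\omega]$.

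Next I would pass to the quotient. Since $L(25,1-\tfrac{m^2}{4})=M(25,1-\tfrac{m^2}{4})/\langle v_{m-1}\rangle$, Lemma~\ref{muz} gives $A(L(25,1-\tfrac{m^2}{4}))\cong A(M(25,1-\tfrac{m^2}{4}))/[\langle v_{m-1}\rangle]$, and therefore
\begin{align*}
A\!\left(L\!\left(25,1-\tfrac{m^2}{4}\right)\right)\otimes_{\mathbb{C}[y]}L\!\left(25,1-\tfrac{n^2}{4}\right)(0)\cong\mathbb{C}[x]\big/\big\langle \text{image of }[\langle v_{m-1}\rangle]\big\rangle .
\end{align*}
So the task reduces to identifying, inside $\mathbb{C}[x]$, the ideal generated by the image of the singular vector $v_{m-1}=O_{m-1,1}(-1)v_{1-m^2/4}$ and of all its descendants $L_{-I}v_{m-1}$. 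The key computational input is (\ref{y12})--(\ref{x2}): the class of $L(-j_1)\cdots L(-j_k)v_{1-m^2/4}$ in the tensor product is exactly the value $P(j_1,\dots,j_k)$ appearing in the density-module action (\ref{s22}) with $\lambda=\tfrac{n^2}{4}-1$ and $\mu=2-\tfrac{n^2}{4}-\tfrac{m^2}{4}-x$. Consequently the image of $v_{m-1}$ in $\mathbb{C}[x]$ is $f_{m-1,1}(\lambda,\mu,-1)$ evaluated at these parameters, and the computation (\ref{opg}) gives $f_{m-1,1}^2=\prod_{i\in\mathcal{L}_{m,n}}\big(x-(1-\tfrac{i^2}{4})\big)^2$, whence up to a nonzero scalar the image of $v_{m-1}$ is $\prod_{i\in\mathcal{L}_{m,n}}\big(x-(1-\tfrac{i^2}{4})\big)$.

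To finish I must show the ideal is generated by this single polynomial, i.e.\ that the descendants $L_{-I}v_{m-1}$ contribute nothing new. Here I would argue as in \cite{M}: a general element of $\langle v_{m-1}\rangle$ is a sum of terms $L(-j_1)\cdots L(-j_k)v_{m-1}$, and applying (\ref{y12}) again (now with $v_{m-1}$ in place of $v_{1-m^2/4}$, shifting the weight bookkeeping in $\beta(r,k)$) shows each such class equals a polynomial multiple of the class of $v_{m-1}$ itself in the tensor product — because the tensor product is a cyclic $\mathbb{C}[x]$-module generated by $\big[v_{1-m^2/4}\big]$, and the class of any PBW monomial applied to $v_{1-m^2/4}$ is a scalar polynomial in $x$ times that generator. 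Thus $[\langle v_{m-1}\rangle]$ maps onto the principal ideal $\big\langle\prod_{i\in\mathcal{L}_{m,n}}(x-(1-\tfrac{i^2}{4}))\big\rangle$, yielding (\ref{ngm}). The main obstacle is the last point: justifying rigorously that passing to the quotient by \emph{all} descendants of $v_{m-1}$ produces exactly the principal ideal and does not collapse $\mathbb{C}[x]$ further — one must check that no additional relations among powers of $x$ are forced, equivalently that $\mathbb{C}[x]/\langle\prod_{i\in\mathcal{L}_{m,n}}(x-(1-\tfrac{i^2}{4}))\rangle$ really is the Zhu bimodule of the quotient and not a proper further quotient; this is where the precise structure of $A(M(25,h))$ as a \emph{free} $\mathbb{C}[y]$-module of rank given by $\mathbb{C}[x]$, together with the fact that the singular vector generates a submodule whose Zhu image is principal, must be invoked.
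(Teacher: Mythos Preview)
Your proposal is correct and follows the same route as the paper: identify $A(M(25,1-\tfrac{m^2}{4}))\otimes_{\mathbb{C}[y]}\mathbb{C}_{1-n^2/4}\cong\mathbb{C}[x]$, then pass to the quotient via Lemma~\ref{muz} and compute the image of the singular vector through the density-module projection formula (\ref{opg}). Your final worry is in fact already answered by your own argument: since $v_{m-1}$ is homogeneous of weight $1-\tfrac{m^2}{4}+(m-1)$, the inductive step (\ref{tucu2}) applies verbatim with $v_{m-1}$ in place of $v_{1-m^2/4}$ (only the $L(0)$-eigenvalue changes), so each $[L(-j_1)\cdots L(-j_k)v_{m-1}]$ is a polynomial \emph{multiple} of $[v_{m-1}]$ in $\mathbb{C}[x]$ and hence cannot cut the ideal down further.
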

\begin{proof}
	Let us recall that from (\ref{w1}), (\ref{w2}) together with  Remark \ref{1r2} we have 
	\begin{align}\label{somr}
	A\left(M\left(25,1-\frac{m^2}{4}\right)\right)\tt_{A(L(25,0))} L\left(25,1-\frac{n^2}{4}\right)\big(0\big)\cong \mathbb{C}[x,y]\tt_{\mathbb{C}[y]} \mathbb{C}_{1-\frac{n^2}{4}}.
	\end{align}
	Now, using projection formula for the singular vector in $v_{1,m-1}$ on $\mathcal{D}_{\frac{n^2}{4}-1, 2-\frac{n^2}{4}-\frac{m^2}{4}-x} $ that we obtained in (\ref{opg}) we see that for $m\geq 3$
	\begin{align}
	v_{1,m-1}w_0=\prod_{i \in \mathcal{L}_{m, n} } \left(x-\left(1-\frac{i^2}{4}\right)\right)w_{m-1}.
	\end{align} 
	Next, using Lemma \ref{muz} together with (\ref{somr}) and (\ref{s22}) we obtain that 
		\begin{align*}
	A\left(L \left(25, 1-\frac{m^2}{4}\right)\right)\tt_{A(L(25, 0))}L\left(25, 1-\frac{n^2}{4}\right)(0)\cong {\mathbb{C}[x]}\big/ _{\left\langle \prod _{i\in \mathcal{L}_{m, n}} (x-(1-i^2/4) \right\rangle}
	\end{align*}
	
\end{proof}

In particular, it follows from Lemma \ref{lilita} that if $m\geq n\geq 3$ then 
\begin{align*}
A\left(L \left(25, 1-\frac{m^2}{4}\right)\right)\tt_{A(25, 0)}L\left(25, 1-\frac{n^2}{4}\right)(0)\cong \bigoplus_{i\in \mathcal{L}_{m,n}}\mathbb{C}v_i
\end{align*}
where $\mathbb{C}v_i$ is the irreducible $A(L(25, 0))$-module such that $y.v_i=(1-\frac{i^2}{4})v_i$.

We have obtained the following result:
\begin{proposition} \label{prop} 
	For $m,n\geq 2$	
	\begin{align*}
	{\rm dim }  \ I\binom{L(25, 1-r^2/4)}{ L(25, 1-m^2/4) \ \ L(25, 1-n^2/4) }\leq 1.
	\end{align*}
	if $r\in \mathcal{L}_{m,n}$
	and 
	\begin{align*}
	{\rm dim }  \ I\binom{L(25, 1-r^2/4)}{ L(25, 1-m^2/4) \ \ L(25, 1-n^2/4) }=0
	\end{align*}
	for $r\notin \mathcal{L}_{m,n}$.
\end{proposition}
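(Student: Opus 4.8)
The plan is to derive both halves of the statement from the Frenkel--Zhu bound \leref{Zhu}, which converts the question into a $\mathrm{Hom}$-space computation over the Zhu algebra $A(L(25,0))\cong\mathbb{C}[y]$ (\leref{w1}); granted the structural results already in hand, this $\mathrm{Hom}$-space reduces to counting how many of the eigenvalues produced in \leref{lilita} coincide with the lowest weight of the target. Throughout I set $V=L(25,0)$, $W_1=L(25,1-m^2/4)$, $W_2=L(25,1-n^2/4)$ and $W_3=L(25,1-r^2/4)$. Since fusion is symmetric in its lower two arguments by \leref{HLL}, I may assume $m\geq n$, so that $\mathcal{L}_{m,n}=\{m-n+2,m-n+4,\dots,m+n-2\}$ is as written with positive entries. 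The cases $n=2$, where $W_1$ or $W_2$ equals $L(25,0)$ itself, are exactly those already settled at the beginning of this section from irreducibility together with \leref{HLL}, so I treat $m,n\geq 3$ from here on.

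First I would apply \leref{Zhu}, legitimate because $W_1,W_2,W_3$ are lowest weight modules and $W_3$ is irreducible, to get
\begin{align*}
\dim I\binom{W_3}{W_1\ \ W_2}\leq \dim {\rm Hom}_{A(V)}\bigl(A(W_1)\tt_{A(V)}W_2(0),\,W_3(0)\bigr).
\end{align*}
The first argument of the $\mathrm{Hom}$ is precisely the module computed in \leref{lilita}, while the second is identified by the analogue of (\ref{1r2}): the top level $W_3(0)=L(25,1-r^2/4)(0)\cong\mathbb{C}_{1-r^2/4}$ is the one-dimensional irreducible $\mathbb{C}[y]$-module on which $y$ acts by the scalar $1-r^2/4$.

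Next I would feed in \leref{lilita} together with the semisimple decomposition recorded immediately after it. Because for $m\geq n\geq 3$ the integers in $\mathcal{L}_{m,n}$ are distinct and positive, the map $i\mapsto 1-i^2/4$ is injective on them, so the quotient ring splits as
\begin{align*}
A(W_1)\tt_{A(V)}W_2(0)\cong\bigoplus_{i\in\mathcal{L}_{m,n}}\mathbb{C}v_i,\qquad y.v_i=\Bigl(1-\tfrac{i^2}{4}\Bigr)v_i .
\end{align*}
Since $A(V)\cong\mathbb{C}[y]$ is commutative, each $\mathbb{C}v_i$ and the target $\mathbb{C}_{1-r^2/4}$ are simple, and a map between two such simples is nonzero (hence one-dimensional) exactly when their $y$-eigenvalues agree; this is Schur's lemma for $\mathbb{C}[y]$-modules. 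Therefore
\begin{align*}
\dim {\rm Hom}_{A(V)}\bigl(\textstyle\bigoplus_{i\in\mathcal{L}_{m,n}}\mathbb{C}v_i,\,\mathbb{C}_{1-r^2/4}\bigr)=\#\bigl\{i\in\mathcal{L}_{m,n}: i^2=r^2\bigr\},
\end{align*}
and as every $i\in\mathcal{L}_{m,n}$ is a positive integer while $r\geq 2$, the equation $i^2=r^2$ has the unique solution $i=r$. Thus the $\mathrm{Hom}$-space has dimension $1$ when $r\in\mathcal{L}_{m,n}$ and dimension $0$ otherwise; combined with the displayed bound this gives $\dim I\le 1$ for $r\in\mathcal{L}_{m,n}$ and $\dim I=0$ for $r\notin\mathcal{L}_{m,n}$, as claimed.

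The substantive content of this argument sits entirely upstream, in \leref{lilita}: the reason the answer is this clean is the $c=25$ (equivalently $t=-1$) simplification of the Feigin--Fuchs projection formula noted in \reref{pi}, which collapses the image of the singular vector to a product of distinct linear factors indexed by $\mathcal{L}_{m,n}$. Given that lemma, the present proposition is a short exercise, and the only points demanding care are the eigenvalue bookkeeping, namely the distinctness of the roots $1-i^2/4$ (so that the quotient ring is genuinely semisimple and $\mathrm{Hom}$ can be counted eigenvalue-by-eigenvalue) and the positivity constraint forcing $i=r$ rather than $i=-r$. I would emphasize that this establishes only the upper bound and the vanishing; producing a nonzero intertwining operator for each $r\in\mathcal{L}_{m,n}$, which yields the matching lower bound and hence the exact fusion rules of the main theorem, is a separate construction.
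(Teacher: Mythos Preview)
Your proof is correct and follows essentially the same route as the paper: apply the Frenkel--Zhu bound of \leref{Zhu}, invoke \leref{lilita} and the semisimple decomposition recorded right after it, and then read off the $\mathrm{Hom}$-dimension from the $y$-eigenvalues. You are somewhat more explicit than the paper about the bookkeeping (distinctness of the roots $1-i^2/4$, the positivity forcing $i=r$, and the reduction to $m\geq n$ via \leref{HLL}), but the argument is the same.
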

Let $g(L(25,0))$ be the lie algebra in (\ref{gv}) and let $\omega=L(-2)\vac\in L(25,0)$.
From Definition \ref{gv} we know that 
\begin{align*}
[\omega(m+1), \omega(n+1)]=(m-n)\omega(n+m+1)+ \delta_{m+n,0}\frac{25}{12}(m^3-m).
\end{align*}
This implies that for any $g(L(25,0))_0$-module $U$, $U(Vir_-)\tt U\hookrightarrow U(g(L(25,0)_-))\tt U\cong F(U)$. It follows that for any $h\in \mathbb{C}$, $M(25, h)\hookrightarrow F(M(25,h)(0))$ and that $M(25, h)^{'}\hookrightarrow F((M(25,h)(0))^{*})$. 

Assume that $m\leq n$. By replacing $F(M(25,1-\frac{n^2}{4})(0))$ with $M(25,1-\frac{n^2}{4})$ and $F(M(25,1-\frac{r^2}{4})(0)^*)$ with $M(25,1-\frac{r^2}{4})^{'}$ following the argument in \cite{L2} we obtain an intertwining operator of type 
\begin{align}\label{liy}
\binom{M(25, 1-\frac{r^2}{4})^{'}}{L(25, 1-\frac{m^2}{4}) \ \ M(25, 1-\frac{n^2}{4})}
\end{align}
that we denote by $\mathcal{Y}_1$.

The contragradient module $M(25, 1-\frac{r^2}{4})^{'}$ is not irreducible so it is not of lowest weight type so we have that $L(25, 1-\frac{r^2}{4})\cong L(25, 1-\frac{r^2}{4})^{'}\subset M(25, 1-\frac{r^2}{4})^{'}$. Namely, if $f_{0}\in$ Hom$(M(25, 1-\frac{r^2}{4})_0, \mathbb{C})\subset M(25, 1-\frac{r^2}{4})^{'} $ is such that $f_0(\vac_{25, 1-\frac{r^2}{4}})=1$, then 
\begin{align*}
U(Vir)f_0\cong L\left(25, 1-\frac{r^2}{4}\right).
\end{align*}
We want to get an intertwining operator of type  $\binom{L(25, 1-\frac{r^2}{4})}{L(25, 1-\frac{m^2}{4}) \ \ L(25, 1-\frac{n^2}{4})}$ from the intertwining operator obtained of type $\binom{M(25, 1-\frac{r^2}{4})^{'}}{L(25, 1-\frac{m^2}{4}) \ \ M(25, 1-\frac{n^2}{4})}$.
We first prove the following 
\begin{lemma} \label{jal}
	Let $m, n, r\geq3$ with $m\leq n\leq r$, and let $\mathcal{Y}_1$ be the intertwining operator obtained in (\ref{liy}). Let $v_{1-\frac{n^2}{4}}$ be the singular vector in $M(25, 1-\frac{n^2}{4})$ gene-\\rating the maximal submodule $J(25, 1-\frac{n^2}{4})\cong M(25, 1-\frac{(n-2)^2}{4})$, let $\vac_{25,1-\frac{r^2}{4}} \in M(25,1-\frac{r^2}{4})^{'\,' }(0)\cong M(25,1-\frac{r^2}{4})(0)  $ and $\vac_{25, 1-\frac{m^2}{4}} \in L(25, 1-\frac{m^2}{4})$ be the respective lowest weight vectors . Then, 
	\begin{align}
	\langle\vac_{25, 1-\frac{r^2}{4}},  \mathcal{Y}_1(\vac_{25, 1-\frac{m^2}{4}}, x)v_{1-\frac{n^2}{4}}\rangle=0 \label{sing0}.
	\end{align}
	
\end{lemma}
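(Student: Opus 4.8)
The statement to prove is that the matrix coefficient of $\mathcal{Y}_1$ between the lowest weight vectors, evaluated on the singular vector $v_{1-\frac{n^2}{4}}$, vanishes. The plan is to exploit the $L(-1)$-derivative property of the intertwining operator together with the $\mathfrak{l}(0)$-grading to reduce the claim to a purely algebraic identity in the density module $\mathcal{D}_{\lambda,\mu}$, and then to invoke the singular vector projection formula~\eqref{opg}.

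First I would write $v_{1-\frac{n^2}{4}} = O_{1,n-1}(-1)\vac_{25,1-\frac{n^2}{4}}$ as a sum $\sum_{|I|=n-1} a_I L_{-I}\vac_{25,1-\frac{n^2}{4}}$ in the notation of Section~\ref{svff}, and expand the left-hand side of~\eqref{sing0} by linearity. Using the Jacobi identity for intertwining operators (equivalently, the commutator formula it yields) I would move each $L_{-j}$ acting inside $\mathcal{Y}_1(\vac_{25,1-\frac{m^2}{4}},x)(\cdot)$ across the operator, and then use that $L_n\vac_{25,1-\frac{m^2}{4}}=0$ for $n>0$, $L_0\vac_{25,1-\frac{m^2}{4}}=(1-\frac{m^2}{4})\vac_{25,1-\frac{m^2}{4}}$, together with the contragredient action which makes $\langle \vac_{25,1-\frac{r^2}{4}}, L_{-n}(\cdot)\rangle = \langle L_n\vac_{25,1-\frac{r^2}{4}},(\cdot)\rangle$ so that only the grading-preserving part survives. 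The upshot — exactly as in Milas~\cite{M} and in the derivation of~\eqref{y12}—\eqref{s22} above—is that $\langle\vac_{25,1-\frac{r^2}{4}},\mathcal{Y}_1(\vac_{25,1-\frac{m^2}{4}},x)L_{-I}\vac_{25,1-\frac{n^2}{4}}\rangle$ is, up to the common power of $x$ fixed by conformal weights, the coefficient $P(i_1,\dots,i_k)$ computed by the density module action~\eqref{s22} with $\lambda=\frac{n^2}{4}-1$ and $\mu = 2-\frac{n^2}{4}-\frac{m^2}{4}-x$, but now with the extra feature that the target weight $1-\frac{r^2}{4}$ must match the level $n-1$ shift; that is, $\langle\vac_{25,1-\frac{r^2}{4}},\mathcal{Y}_1(\vac_{25,1-\frac{m^2}{4}},x)v_{1-\frac{n^2}{4}}\rangle$ is a nonzero scalar multiple of $x^{?}\,f_{1,n-1}(\lambda,\mu,-1)$ evaluated at the point $x = 1-\frac{r^2}{4}$.

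Then I would apply the projection formula: by~\eqref{opg} (with the roles of $m$ and $n$ interchanged, since here the singular vector is in the central-charge-$25$ module of weight $1-\frac{n^2}{4}$ while $\vac_{25,1-\frac{m^2}{4}}$ plays the role of the density-module insertion), $f_{1,n-1}^2$ is $\prod_{i\in\mathcal{L}_{n,m}}\bigl(x-(1-\tfrac{i^2}{4})\bigr)^2$. Since $m\le n\le r$, one needs $r\notin\mathcal{L}_{n,m}=\{n-m+2,\dots,n+m-2\}$: indeed $n+m-2 < r$ is not automatic, so I must instead use the hypothesis $r \le$ nothing — wait, the hypothesis is $m\le n\le r$, and $\mathcal{L}_{n,m}$ has largest element $n+m-2$; if $r > n+m-2$ we are done trivially, and if $r\le n+m-2$ we are in the range where Proposition~\ref{prop} gives a nonzero fusion rule, so the vanishing~\eqref{sing0} must come not from $f$ being zero but from the matching of the $x$-power with the weight constraint — precisely, the weight of $\mathcal{Y}_1(\vac_{25,1-\frac{m^2}{4}},x)v_{1-\frac{n^2}{4}}$ forces the coefficient extracted against $\vac_{25,1-\frac{r^2}{4}}$ to be $f_{1,n-1}$ evaluated at a shifted argument that lands outside the admissible set, or the relevant power of $x$ is simply absent. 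I would make this bookkeeping precise by tracking conformal weights: $\mathrm{wt}\,\vac_{25,1-\frac{m^2}{4}} + \mathrm{wt}\,v_{1-\frac{n^2}{4}} - \mathrm{wt}\,\vac_{25,1-\frac{r^2}{4}} = (1-\tfrac{m^2}{4}) + (1-\tfrac{n^2}{4}+n-1) - (1-\tfrac{r^2}{4})$, which pins down the unique power of $x$ and hence the exact specialization of $f_{1,n-1}$.

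The main obstacle is this last bookkeeping step: correctly identifying which specialization of the projection polynomial~\eqref{opg} appears — i.e., getting the substitution $\mu = 2-\frac{n^2}{4}-\frac{m^2}{4}-x$ and the shift by the singular-vector level $n-1$ exactly right, and then verifying that the resulting value of $x$ at which $f_{1,n-1}$ is evaluated makes it vanish (equivalently, that the corresponding factor $\bigl(x-(1-\tfrac{i^2}{4})\bigr)$ with $i$ in the relevant index set is killed). Everything else — the transfer of $L_{-j}$'s across $\mathcal{Y}_1$ via the Jacobi identity, the reduction to the density module, and the appeal to~\eqref{opg} — is a direct adaptation of the computations already carried out in~\eqref{y12}–\eqref{opg} and in~\cite{M}.
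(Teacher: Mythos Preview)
Your overall strategy matches the paper's: move each $L(-j)$ through $\mathcal{Y}_1$ via the commutator formula coming from the Jacobi identity, use that positive Virasoro modes annihilate $\vac_{25,1-\frac{r^2}{4}}$ on the contragredient side, and thereby reduce $\langle\vac_{25,1-\frac{r^2}{4}},\mathcal{Y}_1(\vac_{25,1-\frac{m^2}{4}},x)\,O_{n-1,1}\vac_{25,1-\frac{n^2}{4}}\rangle$ to a scalar multiple of the density-module projection $f_{n-1,1}$ from~\eqref{opg} (with $m$ and $n$ interchanged), specialized at $x=1-\tfrac{r^2}{4}$.

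The gap is that you have the vanishing condition backwards. With $m\leftrightarrow n$ in~\eqref{opg} one gets $f_{n-1,1}^{\,2}=\prod_{i\in\mathcal{L}_{n,m}}\bigl(x-(1-\tfrac{i^2}{4})\bigr)^{2}$, so at $x=1-\tfrac{r^2}{4}$ this equals $\prod_{i\in\mathcal{L}_{n,m}}\bigl(\tfrac{i^{2}-r^{2}}{4}\bigr)^{2}$, which vanishes exactly when $r\in\mathcal{L}_{n,m}$, \emph{not} when $r\notin\mathcal{L}_{n,m}$. Your detour into ``the matching of the $x$-power with the weight constraint'' and ``the relevant power of $x$ is simply absent'' is therefore chasing a phantom: the $L(0)$-grading pins down a single power $x^{\frac{m^2}{4}+\frac{n^2}{4}-1-\frac{r^2}{4}}$ with a normalization constant $C$ (the paper writes this out explicitly), and the singular-vector matrix coefficient is that same monomial shifted by $-(n-1)$ and multiplied by the scalar $f_{n-1,1}$ at the specialized point. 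There is no further level shift or weight obstruction to track.

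Once the direction is corrected, the argument closes immediately. The statement is only of interest when $r\in\mathcal{L}_{m,n}$ (otherwise the constructed $\mathcal{Y}_1$ is trivial and~\eqref{sing0} is vacuous; the paper invokes this implicitly). Given $m\le n\le r$ together with $r\in\mathcal{L}_{m,n}$, one has $n-m+2\le n\le r\le m+n-2$ with the correct parity, hence $r\in\mathcal{L}_{n,m}$, and the projection polynomial vanishes. That is the entire mechanism.
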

\begin{proof} 
	From the Jacobi identity it follows that 
	\begin{align*} 
	[L(-j), \mathcal{Y}_1(\vac_{25, 1-\frac{m^2}{4}}, x)]=&\dsum_{i\geq 0}\binom{j+i-2}{i}x^{-j-i+1}(-1)^i\mathcal{Y}_1(L(i-1)\vac_{25,1-\frac{m^2}{4}} , x)\\
	=&x^{-j+1}\mathcal{Y}_1(L(-1)\vac_{25,1-\frac{m^2}{4}} , x)-(1-j)x^{-j }\mathcal{Y}_1(L(0)\vac_{25, 1-\frac{m^2}{4}} , x) \\
	=&x^{-j+1}\ddy\mathcal{Y}_1(\vac_{25, 1-\frac{m^2}{4}} , x)+(1-j)\left(1-\frac{m^2}{4}\right)x^{-j }\mathcal{Y}_1(\vac_{25,1-\frac{m^2}{4}} , x) 
	\end{align*}
	which implies that for $j \in \mathbb{N}$
	\begin{align*}
	&\langle\vac_{25, 1-\frac{r^2}{4}}, \mathcal{Y}(\vac_{25,1-\frac{m^2}{4}},x)L(-j)\vac_{25,1-\frac{n^2}{4}}\rangle=\\
	&\langle\vac_{25, 1-\frac{r^2}{4}}, L(-j)\left(\mathcal{Y}(\vac_{25, 1-\frac{m^2}{4}},x)\vac_{25,1-\frac{n^2}{4}}\right)\rangle- \\ & \ \ \ \ \ \ \ \ \ \ \ \ \left( x^{-j+1}\ddx+ (1-j)\left(1-\frac{m^2}{4}\right) x^{-j}\right)\langle\vac_{25, 1-\frac{r^2}{4}}, \mathcal{Y}_1(\vac_{25, 1-\frac{m^2}{4}},x)\vac_{25,1-\frac{n^2}{4}}\rangle=\\
	&- \left( x^{-j+1}\ddx+ (1-j)\left(1-\frac{m^2}{4}\right) x^{-j}\right)\langle\vac_{25, 1-\frac{r^2}{4}}, \mathcal{Y}_1(\vac_{25,1-\frac{m^2}{4}},x)\vac_{25,1-\frac{n^2}{4}}\rangle,
	\end{align*}
	because (cf. \cite{FHL})
	\begin{align*}
	\langle\vac_{25, 1-\frac{r^2}{4}}, L(-j)\left(\mathcal{Y}_1(\vac_{25,1-\frac{m^2}{4}},x)\vac_{25,1-\frac{n^2}{4}}\right)\rangle=\langle L(j)\vac_{25, 1-\frac{r^2}{4}},\mathcal{Y}_1(\vac_{25,1-\frac{m^2}{4}},x)\vac_{25,1-\frac{n^2}{4}} \rangle=0.
	\end{align*}  
	
	More generally, 
	\begin{align}
	\langle&\vac_{25, 1-\frac{r^2}{4}}, \mathcal{Y}(\vac_{25, 1-\frac{m^2}{4}},x)L(-j_1)\cdots L(-j_k)\vac_{25,1-\frac{n^2}{4}}\rangle \nonumber \\
	&=\prod_{i=1}^k  -\left( x^{-j_i+1} \ddx + (1-j_i)x^{-j_i}\left(1-\frac{m^2}{4}\right)\right)\langle\vac_{25, 1-\frac{r^2}{4}}, \mathcal{Y}_1(\vac_{25, 1-\frac{m^2}{4}},x)\vac_{25,1-\frac{n^2}{4}}\rangle \nonumber\\
	&=\prod_{i=1}^k  -\left( x^{-j_i+1} \ddx + (1-j_i)x^{-j_i}\left(1-\frac{m^2}{4}\right)\right)C x^{\frac{m^2}{4}+ \frac{n^2}{4}-1-\frac{r^2}{4}} \nonumber \\
	&=(-1)^{k} \prod_{i=1}^{k} \left(  \frac{m^2}{4} + \frac{n^2}{4}-1-\frac{r^2}{4}  - \sum_{s= i+1}^{k} j_s + (1-j_i) \left(1-\frac{m^2}{4}\right)\right) C x^{\frac{m^2}{4}+ \frac{n^2}{4}-1-\frac{r^2}{4}- \sum_{i=1}^k j_i}  \nonumber \\
	&=   \prod_{i=1}^{k} \left( -j_i \frac{m^2}{4} +\frac{r^2}{4} + \sum_{s= i+1}^{k} j_s - \frac{n^2}{4} \right)C x^{  \frac{m^2}{4}+ \frac{n^2}{4}-1 -\frac{r^2}{4}- \sum_{i=1}^k j_i},  \label{fito}
	\end{align}
	where $C$ is a constant that depends on $\mathcal{Y}_1$ which we may assume to be equal to 1. Note that the coefficients in (\ref{fito}) coincide with the coefficients in (\ref{x2}) if we replace $x$ with $1-\frac{r^2}{4}$ and exchange the roles of $1-\frac{n^2}{4}$ and $1-\frac{m^2}{4}.$ Therefore, we have that $\langle \vac_{25, 1-\frac{r^2}{4}},  \mathcal{Y}_1(\vac_{25, 1-\frac{m^2}{4}}, z)v_{1-\frac{n^2}{4}}\rangle=0$ if and only if the corresponding projection in $A\left(L \left(25, 1-\frac{n^2}{4}\right)\right)\tt_{A(25, 0)}L\left(25, 1-\frac{m^2}{4}\right)(0)$ vanishes. We know from (\ref{ngm}) (with the roles of $m$ and $n$ exchanged) that
	\begin{align*}
	A\left(L \left(25, 1-\frac{n^2}{4}\right)\right)\tt_{A(25, 0)}L\left(25, 1-\frac{m^2}{4}\right)(0)\cong{\mathbb{C}[x]}\big/ _{\big\langle \dprod _{i\in \mathcal{L}_{m, n}} (x-(1-\frac{i^2}{4}) \big\rangle}
	\end{align*}
	Since $m\leq n$ we have that $\mathcal{L}_{n, m}\subset \mathcal{L}_{m, n}$. By hypothesis, a priory $r\in \mathcal{L}_{m, n}$ but using that  $r\geq n$ we have that $r\in \mathcal{L}_{n, m}.$ Therefore, we have that the projection in 
	$A\left(L \left(25, 1-\frac{n^2}{4}\right)\right)\tt_{A(25, 0)}L\left(25, 1-\frac{m^2}{4}\right)(0)$ vanishes which implies that (\ref{sing0}) holds.
	
\end{proof}

\begin{proposition}
	Let $3\leq m\leq n\leq r$, let $r\in \mathcal{L}_{m,n}$ and let $\mathcal{Y}_1$ be the intertwining operator obtained in (\ref{liy}) . Then, 
	\begin{align*}
	\langle w_3', \mathcal{Y}_1(w_1, x)w_2\rangle=0
	\end{align*}
	for any $ w_1 \in L(25, 1-\frac{m^2}{4}), w_2 \in M(25, 1-\frac{(n-2)^2}{4})\hookrightarrow M(25, 1-\frac{n^2}{4})$ and	$w_3^{'}\in M(25, 1-\frac{r^2}{4})^{' \,'}\cong M(25, 1-\frac{r^2}{4}).$
\end{proposition}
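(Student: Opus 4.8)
The plan is to show that the kernel of $\mathcal{Y}_1$ restricted to $L(25, 1-\frac{m^2}{4})\tt M(25, 1-\frac{(n-2)^2}{4})$, paired against $M(25, 1-\frac{r^2}{4})^{'\,'}$, is all of this space, by reducing everything to the vanishing established in \leref{jal} for the lowest weight vectors and then propagating that vanishing via the Jacobi identity. The point is that $M(25, 1-\frac{(n-2)^2}{4})$, viewed as the submodule $J(25, 1-\frac{n^2}{4}) = \langle v_{1-\frac{n^2}{4}}\rangle \hookrightarrow M(25, 1-\frac{n^2}{4})$, is generated as a $Vir$-module by the single singular vector $v_{1-\frac{n^2}{4}}$; likewise $M(25, 1-\frac{r^2}{4})$ is generated by its lowest weight vector $\vac_{25, 1-\frac{r^2}{4}}$, and $L(25,1-\frac{m^2}{4})$ is generated by $\vac_{25, 1-\frac{m^2}{4}}$. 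So a general matrix coefficient $\langle w_3', \mathcal{Y}_1(w_1, x) w_2\rangle$ can be written, after moving Virasoro operators around, as a (finite, $x$-dependent) combination of the single coefficient $\langle \vac_{25,1-\frac{r^2}{4}}, \mathcal{Y}_1(\vac_{25, 1-\frac{m^2}{4}}, x) v_{1-\frac{n^2}{4}}\rangle$, which is $0$ by \leref{jal}.

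First I would make the three reductions precise. Write $w_2 = u\, v_{1-\frac{n^2}{4}}$ for some $u \in U(Vir)$ (possible since $w_2 \in \langle v_{1-\frac{n^2}{4}}\rangle$); write $w_1 = a\, \vac_{25, 1-\frac{m^2}{4}}$ for $a \in U(Vir)$; and note that $w_3' \in M(25, 1-\frac{r^2}{4})^{'\,'} \cong M(25, 1-\frac{r^2}{4})$ can be written $w_3' = b\, \vac_{25, 1-\frac{r^2}{4}}$ for $b \in U(Vir)$, or dually, that testing against all of $M(25, 1-\frac{r^2}{4})^{'\,'}$ is the same as testing the vector $\mathcal{Y}_1(w_1,x) w_2 \in M(25,1-\frac{r^2}{4})^{'}\{x\}$ against the cyclic generator after transporting $b$ to the other side. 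Then I would use the commutator formula between $L(-j)$ (or general $L(k)$) and $\mathcal{Y}_1(\cdot, x)$ coming from the Jacobi identity — exactly the formula already derived in the proof of \leref{jal}, namely
\begin{align*}
[L(k), \mathcal{Y}_1(v, x)] = \sum_{i\geq 0}\binom{k+1}{i} x^{k+1-i}\mathcal{Y}_1(L(i-1)v, x),
\end{align*}
specialized to $v = \vac_{25, 1-\frac{m^2}{4}}$, for which $L(i-1)v = 0$ unless $i \in \{0,1\}$ — to push all Virasoro operators acting on $w_2$ out to act either on $w_3'$ (where they hit the lowest weight vector and can be controlled, or annihilate it when they are lowering-on-the-contragredient) or to turn into differential operators in $x$ acting on scalar matrix coefficients, together with operators acting on $w_1$ which again reduce $a$ to a scalar times the identity on the generator. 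The net effect is the identity schematically of the form
\begin{align*}
\langle w_3', \mathcal{Y}_1(w_1, x) w_2\rangle = D_x\!\left( \langle \vac_{25, 1-\frac{r^2}{4}}, \mathcal{Y}_1(\vac_{25, 1-\frac{m^2}{4}}, x) v_{1-\frac{n^2}{4}}\rangle \right),
\end{align*}
where $D_x$ is a differential operator in $x$ with Laurent-polynomial coefficients (depending on $a$, $b$, $u$, and the weights). Since the argument of $D_x$ is identically zero by \leref{jal}, so is the left side.

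The main obstacle I anticipate is bookkeeping rather than conceptual: one has to be careful that when commuting $L(k)$ for $k \geq 0$ past $\mathcal{Y}_1$ and onto $w_3'$, the resulting action on the contragredient module $M(25, 1-\frac{r^2}{4})^{'\,'}$ is handled correctly — in particular that positive-mode Virasoro operators acting on the lowest weight vector of $M(25, 1-\frac{r^2}{4})^{'\,'}$ either vanish or produce lower-weight contributions that are themselves covered by an induction on weight — and that the $L(-1)$-derivative property and the grading (conformal weights $1-\frac{m^2}{4}$, $1-\frac{n^2}{4}$, etc.) are tracked so that only finitely many terms appear and the "$D_x$ applied to zero is zero" conclusion is legitimate. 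A clean way to organize this is a double induction: outer induction on the $L(0)$-weight (equivalently the PBW length) of $w_2$ within $\langle v_{1-\frac{n^2}{4}}\rangle$, reducing to the case $w_2 = v_{1-\frac{n^2}{4}}$ and then to lower-weight $w_2$'s; and, for fixed $w_2$, a similar reduction on $w_1$ to the generator $\vac_{25, 1-\frac{m^2}{4}}$ and on $w_3'$ to the generator $\vac_{25, 1-\frac{r^2}{4}}$, at each stage invoking the commutator formula above and, at the base case, \leref{jal}. I would also remark that this is precisely the step that mirrors the argument of \cite{M} and \cite{L2} and that no new input beyond \leref{jal} and the Jacobi identity is needed.
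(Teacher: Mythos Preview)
Your proposal is correct and is exactly the approach the paper takes: the paper's entire proof reads ``It follows from the Jacobi identity together with \leref{jal},'' and what you have written is a careful unpacking of precisely that sentence. Your reduction via commutator formulas to the base case handled by \leref{jal} is the intended argument, and your bookkeeping concerns are legitimate but routine, just as you say.
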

\begin{proof}
	It follows from the Jacobi identity together Lemma \ref{jal}.
\end{proof}
Let $\bar{\mathcal{Y}}_1$ be the operator defined by
\begin{align}
\bar{\mathcal{Y}}_1(w_1, x)w_2=\mathcal{Y}_1(w_1, x) \pi_n^{-1}(w_2)
\end{align}
where $\pi_n:M(25, 1-\frac{n^2}{4}) \twoheadrightarrow L(25, 1-\frac{n^2}{4}) $ is the canonical projection.
Then, $\bar{\mathcal{Y}}_1$ is a non trivial  intertwining operator of type  $\binom{M(25, 1-\frac{r^2}{4})^{'}}{L(25, 1-\frac{m^2}{4}) \ \ L(25, 1-\frac{n^2}{4})}$.
Next, we use that 
\begin{align}
I\binom{M(25, 1-\frac{r^2}{4})^{'}}{L(25, 1-\frac{m^2}{4}) \ \ L(25, 1-\frac{n^2}{4})}\cong I\binom{L(25, 1-\frac{n^2}{4})}{L(25, 1-\frac{m^2}{4}) \ \ M(25, 1-\frac{r^2}{4})}.\label{iso}
\end{align}
By our assumption, $3\leq m\leq n \leq r$ and $r\in \mathcal{L}_{m,n}$ so that $m-n+2\leq r \leq m+n-2$. It follows that $r-m+2\leq n \leq r+m-2 $, namely, $n\in \mathcal{L}_{r,m}$.

Let $\mathcal{Y}_2\in \binom{L(25, 1-\frac{n^2}{4})}{L(25, 1-\frac{m^2}{4}) \ \ M(25, 1-\frac{r^2}{4})}$ be the image of $\bar{\mathcal{Y}_1}$ under the isomorphism (\ref{iso}). Let $u_3^{'}\in L(25, 1-\frac{n^2}{4})^{'}\cong L(25, 1-\frac{n^2}{4})$ and let $u_1 \in L(25, 1-\frac{m^2}{4})$ then, for the lowest weight vector $\vac_{1-\frac{r^2}{4}}\in M(25, 1-\frac{r^2}{4})$ we have that
\begin{align}
\langle u&_3', \mathcal{Y}_2(\vac_{25, 1-\frac{m^2}{4}},x)L(-j_1)\cdots L(-j_k)\vac_{1-\frac{r^2}{4}}\rangle \nonumber \\
&=\prod_{i=1}^k  -\left( x^{-j_i+1} \ddx + (1-j_i)x^{-j_i}\left(1-\frac{m^2}{4}\right)\right)\langle u_3', \mathcal{Y}_2(\vac_{25, 1-\frac{m^2}{4}},x)\vac_{1-\frac{r^2}{4}}\rangle \nonumber\\
&=\prod_{i=1}^k  -\left( x^{-j_i+1} \ddx + (1-j_i)x^{-j_i}\left(1-\frac{m^2}{4}\right)\right)D x^{\frac{m^2}{4}+ \frac{r^2}{4}-1-\frac{n^2}{4}} \nonumber \\
&=(-1)^{k} \prod_{i=1}^{k} \left(  \frac{m^2}{4} + \frac{r^2}{4}-1-\frac{n^2}{4}  - \sum_{s= i+1}^{k} j_s + (1-j_i) \left(1-\frac{m^2}{4}\right)\right) D x^{\frac{m^2}{4}+ \frac{r^2}{4}-1-\frac{n^2}{4}- \sum_{i=1}^k j_i}  \nonumber \\
&=   \prod_{i=1}^{k} \left( -j_i \frac{m^2}{4} +\frac{n^2}{4} + \sum_{s= i+1}^{k} j_s - \frac{r^2}{4} \right)D x^{  \frac{m^2}{4}+ \frac{r^2}{4}-1- \frac{n^2}{4}- \sum_{i=1}^k j_i},  \label{fito2}
\end{align}
where $D$ is a constant that depends on $\mathcal{Y}_2$ which we may assume to be equal to 1. Note that again the coefficients in (\ref{fito2}) coincide with the coefficients in (\ref{x2}) if we replace $x$ with $1-\frac{n^2}{4}$ and exchange the roles of $1-\frac{r^2}{4}$ and $1-\frac{m^2}{4}.$ Therefore, if we denote by $v_{1-\frac{r^2}{4}}$ the singular vector generating the maximal submodule in $M(25, 1-\frac{r^2}{4})$, we have that $\langle\vac_{25, 1-\frac{n^2}{4}},  \mathcal{Y}_2(\vac_{25, 1-\frac{m^2}{4}}, z)v_{1-\frac{r^2}{4}}\rangle=0$ if and only if the corresponding projection in $A\left(L \left(25, 1-\frac{r^2}{4}\right)\right)\tt_{A(25, 0)}L\left(25, 1-\frac{m^2}{4}\right)(0)$ vanishes. We know from (\ref{ngm}) that
\begin{align*}
A\left(L \left(25, 1-\frac{r^2}{4}\right)\right)\tt_{A(25, 0)}L\left(25, 1-\frac{m^2}{4}\right)(0)\cong{\mathbb{C}[x]}\big/ _{\langle \prod _{i\in \mathcal{L}_{r, m}} (x-(1-i^2/4) \rangle}.
\end{align*}
Since by hypothesis $n\in \mathcal{L}_{r,m},$ it is clear that 
\begin{align*}
\langle u_3^{'},  \mathcal{Y}_2(u_1, x)v_{1-\frac{r^2}{4}} \rangle=0
\end{align*}
for any $u_3^{'}\in L(25, 1-\frac{n^2}{4})$ and $u_1\in L(25, 1-\frac{m^2}{4}).$ Hence, if we denote the canonical projection by $\pi_r: M(25, 1-\frac{r^2}{4})\twoheadrightarrow L(25, 1-\frac{r^2}{4})$ and define for $u_1 \in L(25, 1-\frac{m^2}{4}), u_2\in L(25, 1-\frac{r^2}{4})$
\begin{align*}
\bar{\mathcal{Y}}_2(u_1, x)u_2:=\mathcal{Y}_2(u_1,x)(\pi_r^{-1}(u_2))
\end{align*}
we obtain a non-trivial intertwining operator $\bar{\mathcal{Y}}_2$ of type 
\begin{align*}
\binom{L(25, 1-\frac{n^2}{4})}{L(25, 1-\frac{m^2}{4}) \ \ L(25, 1-\frac{r^2}{4})}.
\end{align*}
Finally, if $\mathcal{Y}_3$ is the image of $\bar{\mathcal{Y}}_2$ under the isomorphism in (\ref{iso}), then 
\begin{align*}
\mathcal{Y}_3\in I\binom{L(25, 1-\frac{r^2}{4})}{L(25, 1-\frac{m^2}{4}) \ \ L(25, 1-\frac{n^2}{4})}
\end{align*}
is a non trivial intertwining operator.

If instead, $3 \leq m\leq r \langle n$ using an analogous argument and the fact that $r\in \mathcal{L}_{n,m}$ we end up with an intertwining operator $\bar{\mathcal{Y}}^2$ of type  $\binom{L(25, 1-\frac{r^2}{4})}{L(25, 1-\frac{m^2}{4}) \ \ L(25, 1-\frac{n^2}{4})}$.

Using Lemma \ref{HLL}, Proposition \ref{prop} together with the discussion above we obtain the following result:

\begin{theorem}
	Let $m,n\geq 0$. Then 
	\begin{align*}
	{\rm{dim} \ I} \binom{L(25, 1-\frac{(r+2)^2}{4})}{L(25, 1-\frac{(m+2)^2}{4})\ \ L(25,1-\frac{(n+2)^2}{4})}=
	\begin{cases} 
	1 & \textrm{\rm if } r \in \{|m-n|, |m-n|+2, \cdots, m+n\}  \\       
	0 & \textrm{\rm otherwise.}  
	\end{cases}
	\end{align*}	
	
\end{theorem}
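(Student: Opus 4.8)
The plan is to assemble the final theorem from the pieces already developed in the excerpt, after the harmless reindexing $m \mapsto m+2$, $n \mapsto n+2$, $r \mapsto r+2$, which turns the set $\mathcal{L}_{m+2,n+2}$ into $\{\,|m-n|, |m-n|+2, \dots, m+n\,\}$ and handles the boundary cases $m=0$ or $n=0$ (where one factor is $L(25,0)$) via the initial computation with the identity module. So I may assume $m,n,r \geq 3$ throughout, and the statement reduces to: $\dim I\binom{L(25,1-r^2/4)}{L(25,1-m^2/4)\ \ L(25,1-n^2/4)}$ equals $1$ if $r \in \mathcal{L}_{m,n}$ and $0$ otherwise.

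First I would record the upper bound: by \prref{prop} (which came from \leref{Zhu}, \leref{lilita}, and the singular-vector projection formula \eqref{opg}), the dimension is $\leq 1$ when $r \in \mathcal{L}_{m,n}$ and is exactly $0$ when $r \notin \mathcal{L}_{m,n}$. This already gives the second case of the theorem and reduces everything to exhibiting one nonzero intertwining operator whenever $r \in \mathcal{L}_{m,n}$. By \leref{HLL}, the three spaces $I\binom{W_3}{W_1\,W_2}$, $I\binom{W_3}{W_2\,W_1}$, $I\binom{W_2'}{W_1\,W_3'}$ have equal dimension and $\mathcal{L}_{m,n}$ is symmetric in $m,n$, so I may order the three parameters and treat, say, the case $m \leq n \leq r$ (with $r \in \mathcal{L}_{m,n}$, equivalently $m-n+2 \leq r \leq m+n-2$), the remaining orderings following by the same symmetry applied to the already-constructed operators or to the dual $M(25,1-\frac{r^2}{4})^{'}$.

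The construction of the nonzero operator is the heart of the matter and proceeds in three descents, exactly as set up after \prref{prop}. Step one: using the lie algebra $g(L(25,0))$ from \deref{gv}, the embeddings $M(25,h) \hookrightarrow F(M(25,h)(0))$ and $M(25,h)^{'} \hookrightarrow F(M(25,h)(0)^*)$, and the argument of \cite{L2}, produce the intertwining operator $\mathcal{Y}_1$ of type \eqref{liy}, i.e. $\binom{M(25,1-\frac{r^2}{4})^{'}}{L(25,1-\frac{m^2}{4})\ \ M(25,1-\frac{n^2}{4})}$. Step two: show $\mathcal{Y}_1$ kills the maximal submodule $J(25,1-\frac{n^2}{4}) \cong M(25,1-\frac{(n-2)^2}{4})$ of the middle argument — this is \leref{jal} together with the Jacobi identity, and the key point is that the matrix coefficients $\langle \vac, \mathcal{Y}_1(\vac,x)L(-j_1)\cdots L(-j_k)\vac\rangle$ computed in \eqref{fito} have the same coefficients as the density-module expression \eqref{x2}, so vanishing is equivalent to vanishing of a projection in $A(L(25,1-\frac{n^2}{4})) \otimes_{A(L(25,0))} L(25,1-\frac{m^2}{4})(0) \cong \mathbb{C}[x]/\langle\prod_{i \in \mathcal{L}_{n,m}}(x-(1-i^2/4))\rangle$; since $m \leq n$ gives $\mathcal{L}_{n,m} \subseteq \mathcal{L}_{m,n}$ and $r \geq n$ forces $r \in \mathcal{L}_{n,m}$, this vanishes. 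Hence $\mathcal{Y}_1$ factors through $L(25,1-\frac{n^2}{4})$, giving $\bar{\mathcal{Y}}_1$ of type $\binom{M(25,1-\frac{r^2}{4})^{'}}{L(25,1-\frac{m^2}{4})\ \ L(25,1-\frac{n^2}{4})}$. Step three: apply the isomorphism \eqref{iso} to view $\bar{\mathcal{Y}}_1$ as $\mathcal{Y}_2$ of type $\binom{L(25,1-\frac{n^2}{4})}{L(25,1-\frac{m^2}{4})\ \ M(25,1-\frac{r^2}{4})}$, and repeat the same matrix-coefficient computation (this is \eqref{fito2}): the coefficients again match \eqref{x2}, and since $r \in \mathcal{L}_{m,n}$ with $m \leq n$ rearranges to $n \in \mathcal{L}_{r,m}$, the operator kills the maximal submodule of $M(25,1-\frac{r^2}{4})$, so it descends to $\bar{\mathcal{Y}}_2$ of type $\binom{L(25,1-\frac{n^2}{4})}{L(25,1-\frac{m^2}{4})\ \ L(25,1-\frac{r^2}{4})}$; one final application of \eqref{iso} yields a nonzero $\mathcal{Y}_3 \in I\binom{L(25,1-\frac{r^2}{4})}{L(25,1-\frac{m^2}{4})\ \ L(25,1-\frac{n^2}{4})}$.

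Combining the upper bound from \prref{prop} with the lower bound from this construction (and \leref{HLL} to cover all orderings of $m,n,r$ and to pass freely between the various permutations of the intertwining-operator type) gives $\dim = 1$ exactly when $r \in \mathcal{L}_{m,n}$ and $0$ otherwise; reinstating the shift $m,n,r \mapsto m+2,n+2,r+2$ yields the stated form. The main obstacle is Step two (and its mirror in Step three): one must verify carefully that the representation-theoretic vanishing of matrix coefficients of $\mathcal{Y}_1$ on singular vectors is genuinely governed by the same polynomial that controls the density-module projection — i.e. that the bookkeeping identifying the coefficients in \eqref{fito} with those in \eqref{x2} is exact, including the roles of $m$ and $n$ being exchanged — since this is what transfers the singular-vector calculation \eqref{opg} (valid precisely because $c=25 \leftrightarrow t=-1$, cf. \reref{pi}) into a statement about intertwining operators. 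Everything else is either quoted (\prref{prop}, \leref{Zhu}, \leref{HLL}, \cite{L2}) or a routine Jacobi-identity manipulation.
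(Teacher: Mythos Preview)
Your proposal is correct and follows essentially the same argument as the paper: the upper bound is \prref{prop}, and the lower bound is the three-step descent $\mathcal{Y}_1 \to \bar{\mathcal{Y}}_1 \to \mathcal{Y}_2 \to \bar{\mathcal{Y}}_2 \to \mathcal{Y}_3$ via \leref{jal}, the isomorphism \eqref{iso}, and the matrix-coefficient computations \eqref{fito}--\eqref{fito2}, with \leref{HLL} handling the orderings and the boundary cases $m=0$ or $n=0$ dispatched by the identity-module computation at the start of Section~3. The paper also treats the remaining ordering $m \leq r < n$ by the analogous descent using $r \in \mathcal{L}_{n,m}$, just as you indicate.
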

\begin{remark}
As we mentioned in Remark \ref{pi}, the central charge $c=25$ was critical to obtaining the correspondence of fusion rules for the family $\mathcal{F}_{25}$ of $L(25,0)$-modules and for the irreducible finite dimensional $sl(2,\mathbb{C})$-modules. The singular vector formulas become simplified when $t=1$ and $t=-1$ in (\ref{ct}) which correspond to dual the central charges $c=1$, studied in \cite{M},  and $c=25$ respectively.
\end{remark}
\bibliographystyle{amsalpha}

\begin{thebibliography}{WWW}
	%%%%%%%%%%%%%%%%%%%%%%%%%%%%%%%%%%%%%%%%%%%%%%%%%%%%%%%%%%%%%%%%%%%%%%%%%
	

	
	\bibitem[B]{B}
	R. Borcherds, \textit{Vertex algebras, Kac Moody algebras, and the Monster.} Proc. Natl. Acad. Sci. USA, {\bf 83} (1986), 3068-3071.
	
	
	\bibitem[BF]{BF}
	D. Ben-Zvi and E. Frenkel,  \textit{Vertex algebras and Algebraic Curves, Second Edition} Mathematical Surveys and Monographs, Vol 88, AMS,  (2004).
	
	\bibitem[BFIZ]{BFIZ} M. Bauer, Ph. Di Francesco, C. Itzykson, J.-B. Zuber, \textit{ Covariant differential equations and singular vectors in virasoro representations,} 
	Nuclear Physics B,
	Vol 362 (1991), 515-562.
	
	\bibitem[D]{D} G. Dhillon {\it Categorical Feigin-Fuchs duality and the linkage principle for W-algebras,} in preparation.
	
	\bibitem[DG]{DG}
	C. Dong and R. Griess, {\em Rank one lattice type vertex operator algebras and their automorphism groups,} J. of Algebra, {\bf 208}, (1998), 262-275.
	
	
	
	\bibitem[FF]{FF} B. L. Feigin and D. B. Fuchs, {\em Representations of the Virasoro algebra}. Representations of Lie groups and related topics. Adv. Stud. Comptemp. Math. {\bf{7}}, 465-554, Gordon and Breach Science Publ. New York, (1990).
	
	\bibitem[FHL]{FHL} I. B. Frenkel, Y. Z Huang and J. Lepowsky, \textit{ On axiomatic approaches to vertex operator algebras and modules,} { Memoirs Amer. Math. Soc.} {\bf 104}, (1993).
	
	
	
	\bibitem[FLM]{FLM}
	I. B. Frenkel, J. Lepowsky and A. Meurman, \textit{Vertex operator algebras and the Monster,} Pure Applied Math., {\bf 134} Academic Press, Boston, (1988).
	
	\bibitem[FS]{FS}I. B. Frenkel, K Styrkas,
	\textit 
	{Modified regular representations of affine and Virasoro algebras, VOA structure and semi-infinite cohomology,}
	Adv. in Math.,
	{\bf 206}, Issue 1,
	(2006), 57-111.
	
	\bibitem[FZ]{FZ}
	I. B Frenkel, Y. Zhu \textit{Vertex operator algebras associated to representations of affine and Virasoro algebras} Duke Math. J {\bf 66} (1992), 123-168.
	
\bibitem[FZ2]{FZ2} I. Frenkel and M. Zhu, \textit{ Vertex algebras associated to modified regular representations of the Virasoro algebra,} Advances in Mathematics, Volume 229, Issue 6, (2012).
	
	\bibitem[HL]{HL} Y-Z. Huang and J. Lepowsky, 
	{\textit{A theory of tensor product for module category ofa vertex algebra, II}}, Selecta Math. {\bf 1} (1995) 757-786.
	
	\bibitem[HLZ]{HLZ}Y-Z. Huang, J. Lepowsky and L. Zhang \textit{Logarithmic Tensor Category Theory for Generalized Modules for a Conformal Vertex Algebra, I: Introduction and Strongly Graded Algebras and Their Generalized Modules.} Conformal Field Theories and Tensor Categories. Mathematical Lectures from Peking University. Springer, Berlin, Heidelberg, (2014).
	\bibitem[IK]{IK} K. Iohara and Y. Koga,
	{\textit{Representation theory of the Virasoro algebra}}, Springer monographs in Mathematics, Springer, Springer London, (2010).
	
	\bibitem[K]{K}
	V. G. Kac, {\textit{Vertex algebras for beginners}}, University
	Lecture Series, {\bf 10}. American Mathematical Society,
	Providence, RI, (1996). Second edition (1998).
	\bibitem[KR]{KR}
	V. Kac and A. Raina, {\textit {Bombay Lectures on Highest Weight Representations,}} World Scientific, Singapore, (1987).
	
	
	\bibitem[Ke]{Ke}
	A. Kent, \textit{ Projection of Virasoro singular vectors} , Phys. Lett.  {\bf B 278} (1992) 443-448.
	
	\bibitem[LL]{LL} J. Lepowsky and H. Li,
	{\em Introduction to vertex operator algebras and their
		representations}. Progress in Mathematics, {\bf{227}}. Birkhuser
	Boston, Inc., Boston, MA, (2004). xiv+318 pp.
	
	
	\bibitem[L]{L} H. Li, {\em Local systems of vertex
		operators, vertex superalgebras and modules}. J. Pure Applied Algebra {\bf{109}} (1996), 143-195.
	
	\bibitem[L1]{L1} H. Li, {\em Representation theory and a tensor product theory for vertex operator algebras}, Ph D thesis, Rutgers Univerity,  1994.
	
	\bibitem[L2]{L2} H. Li, {\em Determining fusion rules by $A(V)$-modules and bimodules,} J. Algebra  {\bf{212}} (1999), 515-556.
	
	
	\bibitem[M]{M}A. Milas, {\em Fusion rings for degenerate minimal models}. J. Algebra {\bf 254} (2002), 300-335.

		
	\bibitem[M1]{M1} A. Milas, \textit{ Logarithmic Intertwining Operators and Vertex Operators,} Commun. Math. Phys. (2008) 277: 497. https://doi.org/10.1007/s00220-007-0375-3.
	
	\bibitem[MA]{MA} A. Milas and D. Adamovi\'c,\textit{ Logarithmic intertwining operators and $\mathcal{W}(2, 2p-1)$  algebras,} Journal of Mathematical Physics 48, 073503 (2007); https://doi.org/10.1063/1.2747725.
	
	\bibitem[Mc]{Mc}R. McRae \textit{ On the tensor structure of modules for compact orbifold vertex operator
	algebras}, arXiv:1810.00747, (2018).



	\bibitem[RW]{RW} A. Rocha-Caridi and N. Wallach, \textit{Projective Modules over Graded Lie Algebras. I}, Math. Z. {\bf 180 }, (1982), 151-177.
	
	
	\bibitem[Se]{Se} G. Segal, \textit{Unitary Representations of some Infinite Dimensional Groups,} Commun. in Math. Phys. {\bf 80}, Springer-Verlag, (1981), 301-342.
	
	\bibitem [S]{S} K. Styrkas {\em Quantum groups, conformal field theories and duality of tensor categories,} Ph. D. disseration, Yale University, (1998).
	
	\bibitem [W]{W} W. Wang, { \em Rationality of Virasoro Vertex Operator Algebras } Internat. Math. Res. Notices {\bf 7} (1993), 197-211.
	
	\bibitem [Z]{Z} Y. Zhu {\em Vertex operator algebras, elliptic functions and modular forms,} Ph. D. disseration, Yale University, (1990).
	
	\bibitem [Z1]{Z1} Y. Zhu {\em Modular invariance of characters of vertex operator algebras,} J. Amer. Math. Soc. {\bf 9} (1996), 237-302.
\end{thebibliography}

\end{document}